\documentclass[12pt,a4paper]{article}

\usepackage{graphicx}
\usepackage{subfig}
\usepackage{amsmath,amsthm,amssymb}
\usepackage{esint}
\usepackage{mathrsfs}
\usepackage{appendix}
\usepackage{enumitem}

\usepackage{hyperref}

\usepackage[left=2.2cm,right=2.2cm,top=3cm,bottom=3.5cm]{geometry}

\usepackage{color}
\usepackage{array}
\usepackage{url}

\usepackage{float}

\newtheorem{theorem}{Theorem}[section]
\newtheorem*{theorem*}{Theorem}
\newtheorem{lemma}[theorem]{Lemma}

\theoremstyle{definition}

\theoremstyle{remark}
\newtheorem{remark}[theorem]{Remark}
\usepackage{authblk}

\usepackage{mathtools}
\usepackage{amssymb}
\usepackage{appendix}
\usepackage{multirow}

\numberwithin{equation}{section}

\usepackage[numbers,sort&compress]{natbib}

\newcommand{\dx}{\ {\rm d} {x}}
\newcommand{\dt}{\ {\rm d} t }

\DeclareMathOperator{\Tr}{Tr}

\allowdisplaybreaks

\begin{document}

\title{A mathematical study of an elastic-viscous-plastic sea-ice model with the Kelvin-Voigt rheology}
\author{Daniel W. Boutros\footnote{Department of Applied Mathematics and Theoretical Physics, University of Cambridge, Cambridge CB3 0WA UK. Email: \textsf{dwb42@cam.ac.uk}}\space,\space Xin Liu\footnote{Department of Mathematics, Texas A\&M University, College Station, TX 77843-3368, USA. Email: \textsf{xliu23@tamu.edu}}\space,\space Marita Thomas\footnote{Department of Mathematics and Computer Science, Freie Universität Berlin, Arnimallee 9, 14195 Berlin, Germany. Email: \textsf{marita.thomas@fu-berlin.de}}\space, and Edriss S. Titi\footnote{Department of Applied Mathematics and Theoretical Physics, University of Cambridge, Cambridge CB3 0WA UK; Department of Mathematics, Texas A\&M University, College Station, TX 77843-3368, USA; also Department of Computer Science and Applied Mathematics, Weizmann Institute of Science, Rehovot 76100, Israel. Emails: \textsf{Edriss.Titi@maths.cam.ac.uk} \; \textsf{titi@math.tamu.edu}}}

\date{April 29, 2026}

\maketitle

\begin{abstract}
Motivated by the elastic-viscous-plastic (EVP) sea-ice model [E. C. Hunke and J. K. Dukowicz, J. Phys. Oceanogr., 27, 9 (1997), 1849--1867], which is used in large-scale numerical climate simulations, we proposed in [D. W. Boutros, X. Liu, M. Thomas and E. S. Titi, arXiv:2505.03080 (2025)] the use of the inviscid Voigt regularisation for the constitutive (stress-tensor) relation and proved the global well-posedness of the resulting model. The EVP model treats sea ice as a non-Newtonian fluid. In turn, elastic-viscous-plastic solids often involve a Kelvin-Voigt viscosity in terms of the strain rate. Therefore, in the present work we formulate an elastic-viscous-plastic sea-ice model with a Kelvin-Voigt regularisation in terms of the strain rate. In other words, we introduce the Voigt regularisation in the momentum balance rather than in the constitutive relation (for the stress tensor). We then prove the local well-posedness for the Kelvin-Voigt EVP model with the advection term, in the momentum balance, and the global well-posedness in the absence of the advection term (following a very standard approximation in the latter case). A crucial component of the proof of these results, is a new $L^\infty$-estimate for the stress tensor which relies on the damping structure. Note that, both with and without the advection term, we are able to handle the case of viscosity coefficients without a cutoff from above, which remains a major open problem for the closely related Hibler sea-ice model. We are also able to prove the existence of solutions for much less regular initial data compared to our previous paper on the Voigt-EVP model.
\end{abstract}

\noindent \textbf{Keywords:} sea-ice dynamics; elastic-viscous-plastic rheology; well-posedness; Hibler's sea-ice model; Kelvin-Voigt regularisation; viscoplasticity; non-Newtonian flows

\vspace{0.1cm} \noindent \textbf{Mathematics Subject Classification:} 35Q86 (primary), 35A01, 35A02, 35A09, 35B65, 74D10, 74H20, 74H25, 74H30, 86A08, 86A40 (secondary)

\section{Introduction}
\subsection{Formulation of the EVP model}
The evolution of the sea-ice cover plays a fundamental role in the climate system, and adequate modelling of the dynamics of sea ice is therefore of great importance. In this paper, we consider the elastic-viscous-plastic (EVP) sea-ice model, which was originally introduced in \cite{hunke}. The EVP model, on the time interval $[0,T]$ and the two-dimensional flat torus $\mathbb{T}^2 = (\mathbb{R} / \mathbb{Z})^2$, is given by
\begin{subequations} \label{EVPsystem}
\begin{align}
&\partial_t u  = \nabla \cdot \sigma + \mathcal{T}_a + \mathcal{T}_w + \Omega u^\perp - g \nabla H_0, \label{EVP1} \\
&\dfrac{1}{\mathcal E} \partial_t \sigma + \dfrac{4 \mathcal{D}}{P} (\sigma - \frac{1}{2} \Tr \sigma \mathbb I_2) + \dfrac{\mathcal{D}}{2P} \Tr \sigma \mathbb I_2 + \dfrac{\mathcal{D}}{2} \mathbb I_2  = D(u), \label{EVP2}
\\
&u \lvert_{t = 0} = u_0, \quad \sigma \lvert_{t = 0} = \sigma_0, \label{EVP3}
\end{align}
\end{subequations}
where $P$ is the internal ice strength parameter, which is a given positive constant in our setting, while $u : \mathbb{T}^2 \times [0,T] \rightarrow \mathbb{R}^2$ is the velocity field, $\sigma : \mathbb{T}^2 \times [0,T] \rightarrow \mathbb{R}^{2 \times 2}_{\mathrm{sym}}$ is the stress tensor, which are the unknowns. The tensor field $D(u)$ is the deformation-rate tensor, i.e., the symmetric part of the velocity gradient
\begin{equation}
D(u) \coloneqq \frac{1}{2} \lbrack \nabla u + (\nabla u)^\top \rbrack.
 \end{equation}
In system \eqref{EVPsystem} above we have used the strain rate, which is defined (and simplified) as follows
\begin{equation}
\mathcal{D} = \lvert D(u) \rvert,
\end{equation}
where in this paper we will use the notation $\lvert \cdot \rvert$ for the Euclidean norm for tensors. In other words, for a tensor $(A_{ij})_{i,j=1}^2$ we define the norm $\lvert \cdot \rvert$ as follows
\begin{equation*}
\lvert A \rvert \coloneqq \sqrt{\sum_{i,j=1}^2 \lvert A_{ij} \rvert^2}.
\end{equation*}
In addition, $\mathcal{T}_a$ and $\mathcal{T}_w$ are the atmospheric and oceanic drag forces, which are given by
\begin{align}
\mathcal{T}_{a} & \coloneqq c_a \rho_a \lvert U_a \rvert \bigg(U_a \cos \phi + U_a^\perp \sin \phi \bigg), \label{atmosphericdrag} \\
\mathcal{T}_{w} & \coloneqq c_w \rho_w \lvert U_w - u \rvert \bigg[ (U_w - u) \cos \theta + (U_w - u)^\perp \sin \theta \bigg]. \label{oceanicdrag}
\end{align}
In the Coriolis term, in equation \eqref{EVP1}, we have used the notation $v^\perp = (-v_2, v_1)^\top$ for $v \in \mathbb{R}^2$. Like in the original paper \cite{hunke}, we have taken the mean ice thickness $h$ and the ice compactness $A$ to be constant, which in turn implies that the mass $m$ and the internal ice strength $P$ are also constants (as we have already assumed above). The given function $H_0$ in equation \eqref{EVP1} describes the ocean surface topography. All the remaining parameters in system \eqref{EVPsystem} and equations \eqref{atmosphericdrag}-\eqref{oceanicdrag} (in particular the angles $\phi$ and $\theta$) are introduced in Table \ref{notationtable}, below.

\begin{table}[h]
    \centering
\begin{tabular}{ |p{2cm}||p{5cm}|p{3cm}|p{3cm}|  }
\hline
Symbol & Meaning & Typical value & Equation of first appearance\\
\hline
$A$ & ice compactness (area covered by `thick' ice) & $0 \leq A \leq 1$ & \\
$c_a$ & air drag coefficient & $1.2 \cdot 10^{-3}$ & \eqref{atmosphericdrag} \\
$c_w$ & ocean drag coefficient & $5.5 \cdot 10^{-3}$ & \eqref{oceanicdrag} \\
$\mathcal{E}$ & elastic modulus & $0.25$ & \eqref{EVP2} \\
$g$ & gravitational constant & $9.81 \; \text{m\,s}^{-1}$ & \eqref{EVP1} \\
$H_0$ & sea surface height & & \eqref{EVP1} \\
$\Omega$ &  rotation parameter & $1.46 \cdot 10^{-4} \; \text{s}^{-1}$ & \eqref{EVP1} \\
$P$ & internal ice strength & & \eqref{EVP2} \\
$\phi$ & air turning angle & $25^\circ$ & \eqref{atmosphericdrag} \\
$\rho_a$ & air density & $1.3 \; \text{kg/m}^3$ & \eqref{atmosphericdrag} \\
$\rho_w$ & ocean water density & $1026 \; \text{kg/m}^3$ & \eqref{oceanicdrag} \\
$\theta$ & water turning angle & $25^\circ$ & \eqref{oceanicdrag} \\
$U_a$ & geostrophic wind & & \eqref{atmosphericdrag} \\
$U_w$ & geostrophic ocean current & & \eqref{oceanicdrag} \\
 \hline
\end{tabular}
    \caption{An overview of the notation used in the EVP model, the typical values are taken from: \cite{hunke,hunkelinearization,mehlmann,lemieux}.}
    \label{notationtable}
\end{table}

The EVP model was introduced in \cite{hunke} as a numerical regularisation of the viscous-plastic Hibler sea-ice model \cite{hibler}. The Hibler model corresponds to formally setting $\mathcal{E} = \infty$ in equations \eqref{EVP1} and \eqref{EVP2}. In a previous paper by the authors \cite{boutros2025}, it has been observed that the 1D EVP model is (formally) linearly ill-posed in Sobolev spaces (in the absence of the advection term and utilising a regularisation of the strain rate $\mathcal{D}$ of the type \eqref{strainrateregularisation}, below). This led to the introduction of the Voigt-regularisation $-\alpha^2 \partial_t \Delta \sigma$, with $\alpha > 0$, on the left-hand side of equation \eqref{EVP2} in \cite{boutros2025} in order to regularise the system. The motivation for this choice of regularisation is like that in the original EVP model \cite{hunke}, namely, it preserves the steady states as well as the formal asymptotic in time statistical (infinite-time average) solutions of the Hibler model. In addition, it is a modification of the elastic term (which was introduced for numerical purposes) rather than the viscous-plastic terms in the constitutive relation (for the stress tensor). 

In this work we consider the other case, namely introducing a Kelvin-Voigt regularisation $-\alpha^2 \partial_t \Delta u$, with $\alpha > 0$, on the left-hand side of equation \eqref{EVP1}. The motivation for doing so is twofold. On the one hand, this choice of regularisation is purely an introduction of a Kelvin-Voigt component into the rheology and therefore has a clear physical interpretation. On the other hand, from the mathematical analysis perspective the nonlinearities of the EVP model contain derivatives of $u$ rather than $\sigma$, which is a reason to introduce a regularisation for $u$ instead of $\sigma$. Due to the Kelvin-Voigt regularisation, we are able to prove the existence of solutions for initial data of lower regularity compared to the case of the Voigt-regularisation of the constitutive relation (for the stress tensor) in \cite{boutros2025}. In the present work we will establish two results, specifically, the local well-posedness of the EVP model with the advection term and Voigt-regularisation in the momentum balance \eqref{EVP1} in Theorem \ref{localwellposednessthm}, and in Theorem \ref{globalwellposednessthm} the global well-posedness of the EVP model without the advection term but including the Voigt regularisation in the momentum balance \eqref{EVP1}.
\subsection{Main results of this paper}
In this paper, we consider two different formulations of the EVP model with the Kelvin-Voigt regularisation to equation \eqref{EVP1}, namely both with and without the advection term in the momentum balance \eqref{EVP1}. The first formulation, which we will call the advective Kelvin-Voigt EVP model, is given by
\begin{subequations} \label{advectionsystem}
\begin{align}
	&\partial_t (u - \alpha^2 \Delta u) + u \cdot \nabla u  = \nabla \cdot \sigma + \mathcal{T}_a + \mathcal{T}_w + \Omega u^\perp - g \nabla H_0, \label{advection1} \\
	&\dfrac{1}{\mathcal E} \partial_t \sigma + \dfrac{4 \mathcal{D}}{P} (\sigma - \frac{1}{2} \Tr \sigma \mathbb I_2) + \dfrac{\mathcal{D}}{2P} \Tr \sigma \mathbb I_2 + \dfrac{\mathcal{D}}{2} \mathbb I_2  = D(u), \label{advection2}
	\\
	&u \lvert_{t = 0} = u_0, \quad \sigma \lvert_{t = 0} = \sigma_0, \label{advection3}
\end{align}
\end{subequations}
for some positive regularisation parameter $\alpha > 0$.

The second formulation that we study is the EVP model with the Voigt-regularisation and without the advection term, which we call the Kelvin-Voigt EVP model and is given by
\begin{subequations} \label{kelvinvoigtsystem}
\begin{align}
	&\partial_t (u - \alpha^2 \Delta u) = \nabla \cdot \sigma + \mathcal{T}_a + \mathcal{T}_w + \Omega u^\perp - g \nabla H_0, \label{kelvinvoigt1} \\
	&\dfrac{1}{\mathcal E} \partial_t \sigma + \dfrac{4 \mathcal{D}}{P} (\sigma - \frac{1}{2} \Tr \sigma \mathbb I_2) + \dfrac{\mathcal{D}}{2P} \Tr \sigma \mathbb I_2 + \dfrac{\mathcal{D}}{2} \mathbb I_2  = D(u), \label{kelvinvoigt2}
	\\
	&u \lvert_{t = 0} = u_0, \quad \sigma \lvert_{t = 0} = \sigma_0, \label{kelvinvoigt3}
\end{align}
\end{subequations}
for some positive regularisation parameter $\alpha > 0$.
We will prove in this paper that the advective Kelvin-Voigt EVP model \eqref{advectionsystem} is locally well-posed, which is stated in the following theorem.
\begin{theorem} \label{localwellposednessthm}
Let $u_0 \in H^2 (\mathbb{T}^2)$ and $\sigma_0 \in H^1 (\mathbb{T}^2) \cap L^\infty (\mathbb{T}^2)$, such that $\sigma_0 = \sigma_0^\top$ for a.e. $x \in \mathbb{T}^2$. Moreover, let $U_a, U_w \in L^4 ((0,T); L^4 (\mathbb{T}^2))$ and $H_0 \in L^2 ((0,T); H^1 (\mathbb{T}^2))$. Then there exists a time $T > 0$, which depends on $u_0$ and $\sigma_0$ (as well as $\alpha$, $c_a$, $c_w$, $\rho_a$, $\rho_w$, $\mathcal{E}$, $g$, $H_0$, $P$, $\phi$, $\theta$, $U_a$ and $U_w$), such that there exists a unique local-in-time strong solution $(u,\sigma)$ to the advective Kelvin-Voigt EVP model \eqref{advectionsystem}, which has the following regularity
\begin{equation}
u \in C([0,T];H^2 (\mathbb{T}^2)), \quad \sigma \in C([0,T]; H^1_w (\mathbb{T}^2)) \cap C([0,T]; L^2 (\mathbb{T}^2))\footnote{We recall the embedding $C([0,T]; H^1_w (\mathbb{T}^2)) \subset C([0,T]; L^2 (\mathbb{T}^2))$, but for the sake of clarity we have separately stated the inclusion of $\sigma$ in the space $C([0,T]; L^2 (\mathbb{T}^2))$ in Theorems \ref{localwellposednessthm} and \ref{globalwellposednessthm}. }.
\end{equation}
Moreover, the unique local solution $(u,\sigma)$ depends continuously on the initial data $(u_0, \sigma_0)$ (see estimate \eqref{continuousdependence}, below). In other words, the advective Kelvin-Voigt EVP model is locally well-posed.  
\end{theorem}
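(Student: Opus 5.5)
We argue by a Galerkin approximation (or equivalently a fixed-point argument) combined with a priori estimates that close on a short time interval, followed by a compactness passage to the limit and a difference estimate for uniqueness and continuous dependence. The strain rate $\mathcal{D}=\lvert D(u)\rvert$ is only Lipschitz in $D(u)$, so we first regularise it as $\mathcal{D}_\delta=\sqrt{\delta^2+\lvert D(u)\rvert^2}$ (or work directly with Galerkin truncations in $u$ only). Since the equation for $\sigma$ is a pointwise ODE in time, it is natural to project only the momentum equation \eqref{advection1} onto Fourier modes $\lvert k\rvert\le N$. For given smooth $u_N$ we then solve \eqref{advection2} exactly, pointwise in $x$. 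The coupled system is then a well-defined ODE/Carathéodory system on a Banach space, locally solvable.

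\textbf{Step 1: $L^\infty$ bound for $\sigma$ via damping.} Write $\sigma=\sigma^D+\frac12\Tr\sigma\,\mathbb I_2$. Then \eqref{advection2} splits into
\[
\tfrac1{\mathcal E}\partial_t\sigma^D+\tfrac{4\mathcal D}{P}\sigma^D=D(u)^D,\qquad \tfrac1{\mathcal E}\partial_t\Tr\sigma+\tfrac{\mathcal D}{P}\Tr\sigma=\Tr D(u)-\mathcal D .
\]
Multiplying pointwise by $\sigma^D$ (respectively $\Tr\sigma$) and using $\lvert D(u)\rvert=\mathcal D$ gives
\[
\tfrac{1}{2\mathcal E}\partial_t\lvert\sigma^D\rvert^2\le \mathcal D\bigl(\lvert\sigma^D\rvert-\tfrac4P\lvert\sigma^D\rvert^2\bigr).
\]
The right-hand side is nonpositive once $\lvert\sigma^D\rvert\ge P/4$, and the trace equation behaves the same way. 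Hence $\|\sigma(t)\|_{L^\infty}\le\max\{\|\sigma_0\|_{L^\infty},CP\}$ uniformly, with no dependence on $u$. This is the key structural estimate, and it is what allows us to avoid any upper cutoff on the viscosities.

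\textbf{Step 2: $H^2$ estimate for $u$ and $H^1$ estimate for $\sigma$.} We test \eqref{advection1} with $u-\Delta u$ (i.e.\ control $\|u\|_{H^1}^2+\alpha^2\|\nabla u\|^2+\dots$, equivalently apply $(I-\alpha^2\Delta)^{-1}$). This gives
\[
\partial_t u=(I-\alpha^2\Delta)^{-1}\bigl[\nabla\cdot\sigma-u\cdot\nabla u+\text{forcing}\bigr].
\]
It follows that $\|\partial_t u\|_{H^2}\lesssim\|\sigma\|_{H^1}+\|u\cdot\nabla u\|_{L^2}+\|\mathcal T_a+\mathcal T_w\|_{L^2}+\|\nabla H_0\|_{L^2}$. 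Here $\|u\cdot\nabla u\|_{L^2}\lesssim\|u\|_{H^2}^2$, and $\|\mathcal T_w\|_{L^2}\lesssim\|U_w\|_{L^4}^2+\|u\|_{L^4}^2$, which is why the $L^4$ hypotheses appear. For $\sigma$ in $H^1$, we differentiate \eqref{advection2} in $x$ and test with $\nabla\sigma$. The dangerous term is $\nabla\mathcal D\,\sigma\cdot\nabla\sigma$, which we bound by $\|\sigma\|_{L^\infty}\|\nabla D(u)\|_{L^2}\|\nabla\sigma\|_{L^2}$ using Step 1 and $\lvert\nabla\mathcal D\rvert\le\lvert\nabla D(u)\rvert$. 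The damping term $\mathcal D\lvert\nabla\sigma\rvert^2$ has a good sign, up to the deviator/trace splitting, which has positive coefficients. The source term $\nabla D(u)$ is in $L^2$ since $u\in H^2$. Altogether, $Y=\|u\|_{H^2}^2+\|\sigma\|_{H^1}^2$ satisfies $Y'\le C(1+Y)^{3/2}+f(t)$ with $f\in L^1$, which yields a uniform bound on a time $T$ depending on the data. The quadratic advection term is the only obstacle to globality, and it is the reason the result is local.

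\textbf{Step 3: limit, uniqueness and continuous dependence.} The bounds give $u_N$ bounded in $W^{1,\infty}(0,T;H^2)$ and $\sigma_N$ bounded in $L^\infty(H^1\cap L^\infty)\cap W^{1,\infty}(L^2)$. Aubin–Lions then gives strong convergence of $u_N$ in $C([0,T];H^{2-\epsilon})$ and of $\sigma_N$ in $C([0,T];L^2)$, which suffices to pass to the limit in $\mathcal D\sigma$ and $\mathcal T_w$. The limit has $u\in C([0,T];H^2)$, directly from the ODE in $H^2$. For $\sigma$ we get $\sigma\in C_w(H^1)\cap C(L^2)$, continuity into $L^2$ following from $\partial_t\sigma\in L^\infty(L^2)$. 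For differences $(\delta u,\delta\sigma)$ of two solutions, we estimate $\|\delta u\|_{H^1}^2+\|\delta\sigma\|_{L^2}^2$, using the Voigt structure at $H^1$ level. The main obstacle is the term $(\mathcal D_1-\mathcal D_2)\sigma_2$, but $\lvert\mathcal D_1-\mathcal D_2\rvert\le\lvert D(\delta u)\rvert$ and $\sigma_2\in L^\infty$ by Step 1, so it is bounded by $\|\sigma_2\|_{L^\infty}\|\nabla\delta u\|_{L^2}\|\delta\sigma\|_{L^2}$. The advection difference is handled using $u_i\in H^2\subset W^{1,4}$. Gronwall then yields uniqueness and the estimate \eqref{continuousdependence}.
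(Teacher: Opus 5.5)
Your argument is correct, and its analytic core is the paper's. You use a $u$-independent $L^\infty$ bound on $\sigma$ that comes from the damping structure. It bounds the dangerous term $\nabla\mathcal{D}\,\sigma:\nabla\sigma$ by $\lVert\sigma\rVert_{L^\infty}\lVert u\rVert_{H^2}\lVert\nabla\sigma\rVert_{L^2}$. You then run a difference estimate for $\lVert\delta u\rVert_{H^1}^2+\lVert\delta\sigma\rVert_{L^2}^2$, pairing $\lvert\mathcal{D}_1-\mathcal{D}_2\rvert\le\lvert D(\delta u)\rvert$ with $\sigma_i\in L^\infty$.

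Where you genuinely differ is in how the approximate solutions are built. The paper projects both equations onto Fourier modes. Since $\mathbf{Q}_N$ destroys the pointwise-in-$x$ structure of the stress equation, the $L^\infty$ bound is unavailable for the Galerkin solutions. The paper therefore adds $\beta^4\partial_t\Delta^2u$, mollifies the data, and constructs $H^3$-solutions whose existence time depends on $\beta,\delta$. Only then does it apply Lemma \ref{Lpestimatelemma} (an $L^p$ estimate with $p\to\infty$) to get $\beta$-uniform bounds, at the cost of an extra approximation layer.

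Your semi-Galerkin scheme leaves the stress equation unprojected, so it stays an ODE in $t$ for each $x$. Your pointwise maximum-principle argument then works directly at the approximate level. It can even be done in one stroke for $\tau=\sigma+\frac{P}{2}\mathbb{I}_2$, where $\frac{1}{2\mathcal{E}}\partial_t\lvert\tau\rvert^2\le\mathcal{D}\lvert\tau\rvert(1-\lvert\tau\rvert/P)$. This removes the $\beta$- and mollification layers entirely.

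The price is that local solvability now concerns a coupled system on $\mathbb{C}^M\times X$ with $X$ infinite-dimensional, e.g.\ $H^1\cap L^\infty$. That requires local Lipschitz continuity of $(a,\sigma)\mapsto\mathcal{D}(u_N)\sigma$ in $X$, which is where your regularisation $\mathcal{D}_\delta$ earns its keep. The paper instead only needs a finite-dimensional Picard--Lindel\"of argument. A further benefit of your pointwise argument is that it applies verbatim to any solution in the uniqueness class. Indeed $\partial_t\sigma\in L^1((0,T)\times\mathbb{T}^2)$ makes $\sigma(x,\cdot)$ absolutely continuous for a.e.\ $x$, and this is what justifies $\sigma_i\in L^\infty$ in your Step 3.

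Two small corrections. First, under the stated hypotheses ($U_a,U_w\in L^4((0,T);L^4)$, $H_0\in L^2((0,T);H^1)$) you only get $\partial_t u_N$ bounded in $L^2((0,T);H^2)$, not in $L^\infty$. This still suffices for $u\in C([0,T];H^2)$. Second, your difference estimate omits the drag term. It is controlled by $\int(\lvert U_w\rvert+\lvert u_1\rvert+\lvert u_2\rvert)\lvert\delta u\rvert^2\lesssim(\lVert U_w\rVert_{L^2}+\lVert u_1\rVert_{L^\infty}+\lVert u_2\rVert_{L^\infty})\lVert\delta u\rVert_{H^1}^2$, using the Voigt control of $\lVert\delta u\rVert_{H^1}$ and $H^1\subset L^4$. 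This needs only the assumed $L^4$ integrability of $U_w$, whereas the paper's bound for this term invokes $\lVert U_w\rVert_{L^\infty}$.
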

In the next theorem we will establish that the Kelvin-Voigt EVP model \eqref{kelvinvoigtsystem} is globally well-posed.
\begin{theorem} \label{globalwellposednessthm}
Let $u_0 \in H^2 (\mathbb{T}^2)$ and $\sigma_0 \in H^1 (\mathbb{T}^2) \cap L^\infty (\mathbb{T}^2)$, such that $\sigma_0 = \sigma_0^\top$ for a.e. $x \in \mathbb{T}^2$.  Moreover, let $U_a \in L^4 ((0,T); L^4 (\mathbb{T}^2))$, $U_w \in L^4 ((0,T); L^\infty (\mathbb{T}^2))$ and $H_0 \in L^2 ((0,T); H^1 (\mathbb{T}^2))$. Then there exists a unique global-in-time strong solution $(u,\sigma)$ to the Kelvin-Voigt EVP model \eqref{kelvinvoigtsystem}, so that for every $T > 0$
\begin{equation}
u \in C([0,T];H^2 (\mathbb{T}^2)), \quad \sigma \in C([0,T]; H^1_w (\mathbb{T}^2)) \cap C([0,T]; L^2 (\mathbb{T}^2)).
\end{equation}
The unique global solution $(u,\sigma)$ depends continuously on the initial data $(u_0, \sigma_0)$, and satisfies an analogous estimate to \eqref{continuousdependence}, below. Therefore, the Kelvin-Voigt EVP model is globally well-posed.
\end{theorem}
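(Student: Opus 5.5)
The proof follows the scheme of Theorem \ref{localwellposednessthm}: a priori estimates on a Galerkin (or mollified) approximation, compactness, and a uniqueness/stability estimate. The difference is that, without the advection term, every estimate must close globally in time.

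Approximation and damping structure. We will regularise the strain rate by $\mathcal{D}_\delta = \sqrt{\delta^2 + |D(u)|^2}$ and project the velocity equation onto finitely many Fourier modes. Since $(I-\alpha^2\Delta)^{-1}\nabla\cdot$ gains one derivative, the map $\sigma \mapsto u$ is Lipschitz from $L^2$ into $H^1$, and from $H^1$ into $H^2$. For fixed $u$, \eqref{kelvinvoigt2} is a pointwise linear ODE for $\sigma$. Writing $\sigma = \sigma^D + \frac12 \Tr\sigma\, \mathbb{I}_2$, the deviatoric part $\sigma^D$ has damping coefficient $\mathcal{E}\cdot 4\mathcal{D}/P$ and forcing $\mathcal{E} D(u)^D$. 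The trace part has damping $\mathcal{E}\mathcal{D}/P$ and forcing $\mathcal{E}(\operatorname{div} u - \mathcal{D})$.

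$L^\infty$ bound on $\sigma$. Since $|D(u)^D| \le \mathcal{D}$ and $|\operatorname{div}u - \mathcal{D}| \lesssim \mathcal{D}$, the forcing is bounded by a constant multiple of the damping coefficient. Comparing the pointwise ODE $\frac{d}{dt}|\sigma^D| \le -\mathcal{E}\frac{4\mathcal{D}}{P}\big(|\sigma^D| - \tfrac{P}{4}\big)$ with a constant, and doing the same for the trace, we obtain
$$\|\sigma(t)\|_{L^\infty} \le \max\{\|\sigma_0\|_{L^\infty}, CP\}$$
uniformly in $t$, $\delta$, and the Galerkin level. This is the damping-based $L^\infty$ estimate mentioned in the abstract. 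It holds for every $T$ and requires no cutoff of the viscosity from above.

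Energy and higher-order estimates. Testing \eqref{kelvinvoigt1} with $u$ gives
$$\tfrac{d}{dt}\big(\|u\|_{L^2}^2 + \alpha^2\|\nabla u\|_{L^2}^2\big) \le 2\|\sigma\|_{L^2}\|\nabla u\|_{L^2} + \text{forcing}.$$
The oceanic drag contributes the good term $-c_w\rho_w\cos\theta\,|u|^3$ plus terms controlled by $U_w$, and the Coriolis term drops out. Because $\sigma$ is already bounded in $L^\infty\subset L^2$, Gronwall yields a global $H^1$ bound on $u$. For the $H^2$ bound we test with $\Delta^2 u$ (equivalently, we apply $(I-\alpha^2\Delta)^{-1}$), which requires $\nabla\sigma\in L^2$. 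Differentiating \eqref{kelvinvoigt2} gives
$$\tfrac{1}{2\mathcal{E}}\tfrac{d}{dt}\|\nabla\sigma\|_{L^2}^2 \le \int |\nabla \mathcal{D}|\,(1+|\sigma|)\,|\nabla\sigma| + \|\nabla^2 u\|_{L^2}\|\nabla\sigma\|_{L^2},$$
after discarding the nonnegative damping term $\int \mathcal{D}\,\nabla\sigma\cdot\nabla\sigma$ (up to its coercive structure). Here $|\nabla\mathcal{D}| \le |\nabla^2 u|$ and $|\sigma|$ is bounded in $L^\infty$. The drag term $|U_w-u|(U_w-u)$ has gradient bounded by $|U_w-u|\,|\nabla u|$. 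We bound it in $L^2$ using $U_w\in L^4_tL^\infty_x$, which is why the hypothesis is stronger than in Theorem \ref{localwellposednessthm}, together with $u\in H^2\subset L^\infty$; in the $H^2$ inequality it then appears only linearly in $\|u\|_{H^2}$. The resulting inequality for
$$Y = \|u\|_{H^2}^2 + \|\nabla\sigma\|_{L^2}^2$$
reads $Y' \le C(t)(1+Y)$ with $C\in L^1(0,T)$, so Gronwall closes it for every $T$.

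Passage to the limit, uniqueness, and continuity. The uniform bounds, combined with Aubin--Lions for $u$ (since $\partial_t u$ is bounded in $H^1$), let us pass to the limit in $\delta$ and in the Galerkin level. For $\sigma$ we use weak-$*$ convergence, and identify the nonlinear terms $\mathcal{D}\sigma$ via strong convergence of $D(u)$ in $L^2$. We obtain $\sigma\in C([0,T];H^1_w)$, and continuity in $L^2$ follows from the $L^2$ difference estimate. For uniqueness, we take two solutions and estimate $\|u_1-u_2\|_{H^1}^2 + \|\sigma_1-\sigma_2\|_{L^2}^2$. The difference of the damping terms splits as $\mathcal{D}_1(\sigma_1-\sigma_2) + (\mathcal{D}_1-\mathcal{D}_2)\sigma_2$. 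The first piece is monotone. For the second, $|\mathcal{D}_1-\mathcal{D}_2| \le |D(u_1-u_2)|$ and $\sigma_2\in L^\infty$, so it is bounded by $\|\sigma_2\|_{L^\infty}\|\nabla(u_1-u_2)\|_{L^2}$. This is the only place the $L^\infty$ bound is truly indispensable, and it is the main obstacle; the damping-based comparison argument above is designed precisely to supply it. Gronwall then gives an estimate analogous to \eqref{continuousdependence}, which yields uniqueness and continuous dependence on every $[0,T]$.
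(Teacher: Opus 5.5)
Your proposal is correct, and it establishes the same chain of estimates as the paper, but by a different route in three respects:
\begin{itemize}
\item a damping-based, time-uniform $L^\infty$ bound on $\sigma$;
\item a global $H^1$ bound on $u$;
\item a Gronwall-closable inequality for $\lVert u\rVert_{H^2}^2+\lVert\nabla\sigma\rVert_{L^2}^2$, where $\lVert\sigma\rVert_{L^\infty}$ controls $\int\lvert\nabla\mathcal{D}\rvert\,\lvert\sigma\rvert\,\lvert\nabla\sigma\rvert$;
\item an $L^2$ difference estimate, where $\lVert\sigma\rVert_{L^\infty}$ controls $(\mathcal{D}_1-\mathcal{D}_2)\sigma$.
\end{itemize}

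\textbf{Approximation scheme.} The paper projects both equations and notes that the $L^\infty$ bound is then unavailable. It therefore adds $\beta^4\partial_t\Delta^2 u$ with mollified data, works on a $\beta$-dependent existence time, and takes the limits $N\to\infty$, then $\beta,\delta\to0$, then $\epsilon\to0$ in succession. Your semi-Galerkin scheme projects only the momentum equation and solves the constitutive law exactly, pointwise in $x$, with the smooth coefficient $\mathcal{D}_\delta(u_N)$ (your $\delta$ is the paper's $\epsilon$). As a result, the $L^\infty$ and $H^1$ bounds on $\sigma$ already hold at the approximate level, the data need no mollification, and a single limit suffices.

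\textbf{The $L^\infty$ bound.} You obtain it from an invariant-region (ODE comparison) argument applied separately to $\sigma-\frac12\Tr\sigma\,\mathbb{I}_2$ and $\Tr\sigma$. The paper instead uses $L^p$ estimates for $\tau=\sigma+\frac P2\mathbb{I}_2$ and lets $p\to\infty$ in Lemma \ref{Lpestimatelemma}. Your argument is more elementary and gives the same conclusion up to constants. The paper's lemma has the advantage of needing no pointwise-in-$x$ interpretation and only $u\in C([0,T];H^1(\mathbb{T}^2))$.

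\textbf{Coupling terms.} You bound $\int\sigma:\nabla u$ and $\int\nabla D(u):\nabla\sigma$ directly, using the $L^\infty$ bound and the gain of $(I-\alpha^2\Delta)^{-1}$, rather than cancelling them as in $I_9=I_{12}=0$. This dispenses with the symmetry of $\sigma$ and the shift $\tau$. The cost is that even your basic energy bound now relies on the $L^\infty$ estimate.

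A few side remarks in your write-up are inaccurate, though none is load-bearing.
\begin{itemize}
\item The bound $\lvert\nabla(\lvert U_w-u\rvert(U_w-u))\rvert\lesssim\lvert U_w-u\rvert\,\lvert\nabla u\rvert$ is false unless $U_w$ is constant in space. A term $\lvert U_w-u\rvert\,\lvert\nabla U_w\rvert$ appears, and no derivative of $U_w$ is assumed.
\item That bound is not needed. The $H^2$ estimate uses only $\lVert\mathcal{T}_w\rVert_{L^2}\lesssim\lVert U_w\rVert_{L^4}^2+\lVert u\rVert_{L^4}^2$. You get it by testing with $-\Delta u$ as in $I_{13}$, or with $\Delta^2 u$ only after inverting $I-\alpha^2\Delta$. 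Testing the uninverted equation with $\Delta^2 u$ gives an $H^3$ estimate that does not close.
\item Consequently, that step does not explain the hypothesis $U_w\in L^4((0,T);L^\infty(\mathbb{T}^2))$. In the paper this hypothesis enters through the $L^2$ bound on the drag and through $J_7$.
\item Uniqueness is not the only place where the $L^\infty$ bound is indispensable. Your own $\nabla\sigma$ estimate needs it for $\int\lvert\nabla\mathcal{D}\rvert\,\lvert\sigma\rvert\,\lvert\nabla\sigma\rvert$, exactly as in the paper's bound on $I_{14}$.
\end{itemize}
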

As an intermediate step in the proof of both of these theorems, we will utilise the following regularisation of the strain rate
\begin{equation} \label{strainrateregularisation}
\mathcal{D}_\epsilon \coloneqq \sqrt{\lvert D(u) \rvert^2 + \epsilon^2},
\end{equation}
for a small parameter $\epsilon > 0$, which we will eventually send to zero in the proofs. Therefore, in the presence of the advection term, we will consider the following intermediate system, which we will refer to as the regularised advective Kelvin-Voigt EVP model,
\begin{subequations} \label{advectionsystemepsilon}
\begin{align}
&\partial_t (u - \alpha^2 \Delta u) + u \cdot \nabla u  = \nabla \cdot \sigma + \mathcal{T}_a + \mathcal{T}_w + \Omega u^\perp - g \nabla H_0, \label{advection1epsilon} \\
&\dfrac{1}{\mathcal E} \partial_t \sigma + \dfrac{4 \mathcal{D}_\epsilon}{P} (\sigma - \frac{1}{2} \Tr \sigma \mathbb I_2) + \dfrac{\mathcal{D}_\epsilon}{2P} \Tr \sigma \mathbb I_2 + \dfrac{\mathcal{D}_\epsilon}{2} \mathbb I_2  = D(u). \label{advection2epsilon}
\\
&u \lvert_{t = 0} = u_0, \quad \sigma \lvert_{t = 0} = \sigma_0. \label{advection3epsilon}
\end{align}
\end{subequations}
In the case without the advection term, we will analyse the following intermediate system
\begin{subequations} \label{kelvinvoigtsystemepsilon}
\begin{align}
&\partial_t (u - \alpha^2 \Delta u) = \nabla \cdot \sigma + \mathcal{T}_a + \mathcal{T}_w + \Omega u^\perp - g \nabla H_0, \label{kelvinvoigt1epsilon} \\
&\dfrac{1}{\mathcal E} \partial_t \sigma + \dfrac{4 \mathcal{D}_\epsilon}{P} (\sigma - \frac{1}{2} \Tr \sigma \mathbb I_2) + \dfrac{\mathcal{D}_\epsilon}{2P} \Tr \sigma \mathbb I_2 + \dfrac{\mathcal{D}_\epsilon}{2} \mathbb I_2  = D(u), \label{kelvinvoigt2epsilon}
	\\
&u \lvert_{t = 0} = u_0, \quad \sigma \lvert_{t = 0} = \sigma_0, \label{kelvinvoigt3epsilon}
\end{align}
\end{subequations}
which we will refer to as the regularised Kelvin-Voigt EVP model. In the proofs of Theorems \ref{localwellposednessthm} and \ref{globalwellposednessthm} we will first show the existence of strong solutions to the intermediate systems \eqref{advectionsystemepsilon} and \eqref{kelvinvoigtsystemepsilon}, respectively, and then pass to the limit $\epsilon \rightarrow 0$ in order to demonstrate the well-posedness of the limiting (advective) Kelvin-Voigt systems \eqref{advectionsystem} and \eqref{kelvinvoigtsystem}, respectively.
\begin{remark}
To the best of our knowledge, this is the first rigorous result in the analysis of sea-ice dynamics on the limit $\epsilon \rightarrow 0$ (i.e., the removal of the cutoff of the viscosity coefficients) in the presence of the advection term in the momentum balance in the multi-dimensional case. The removal of the viscosity cutoff has already been justified rigorously in \cite{boutros2025} for the two-dimensional Voigt-EVP model, but the estimates do not allow the passage to the limit $\epsilon \rightarrow 0$ if the advection term is included in the momentum balance. We note that for the multi-dimensional Hibler model (i.e., including advection) the rigorous treatment of the limit $\epsilon \rightarrow 0$ remains a major open problem, cf. \cite{liu}. In the case of the one-dimensional Hibler model, the limit $\epsilon \rightarrow 0$ has been recently treated in \cite{liu2026}.
\end{remark}
\begin{remark}
We also note that, as opposed to the results in \cite[Theorem 1.2]{boutros2025}, in Theorem \ref{globalwellposednessthm} we are not required to make an assumption on the value of the water turning angle $\theta$. This is due to the fact that the velocity is regularised instead of the stress tensor, which means that the higher order estimate can be closed without size restrictions on the turning angle.
\end{remark}

\subsection{Overview of the literature}
\subsubsection{The sea-ice modelling literature}
Because of the complex nature of sea ice as a multiscale material, modelling its rheology and dynamics accurately remains a major challenge. Due to the vast scope of the sea-ice dynamics literature and constraints of brevity, the overview that will be given below will unfortunately be limited. We refer to \cite{boutros2025} for more references on this topic. 

The Arctic Ice Dynamics Joint Experiment led to the development of an elastic-plastic rheology to model sea-ice dynamics \cite{coon}. Several years later, in \cite{hibler} a viscous-plastic rheology was developed by Hibler to model sea-ice drift, which remains one of the most standard sea-ice rheologies in use today. Although the Hibler model has been successful in describing sea-ice drift \cite{losch}, its computational implementation remains very expensive (particular so when using explicit numerical schemes) \cite{bouillon2009,ip,koldunov}. Over time, implicit numerical schemes have been developed to compute solutions to the Hibler model, for example the Jacobian-free Newton Krylov solver \cite{lemieux,losch2014,seinen} and the line relaxation method \cite{zhang}.

In order to improve the computational efficiency of the Hibler model, in \cite{hunke} the elastic-viscous-plastic (EVP) sea-ice model was introduced, which is an (elastic) relaxation of the Hibler model. The advantage of the EVP approach is that it facilitates the use of explicit numerical schemes (and hence also of parallel computing) and therefore its use in numerical simulations is less computationally intensive \cite{hunke2002,hunkelinearization,bouillon2009}. Progressively, several reformulations of the EVP model were introduced to improve its modelling and computational performance \cite{hunkelinearization,bouillon}. We remark also that several different sea-ice rheologies have been proposed, see for example \cite{dansereau,tsamados,wilchinsky,heorton2018}. For the multi-scale analysis of sea-ice dynamics we refer for example to \cite{chen,deng,toppaladoddi}. 

\subsubsection{The mathematical analysis literature on sea-ice dynamics}
Until recently, the mathematical analysis of sea-ice models had received very little attention in the literature. The paper \cite{liu} proved the local well-posedness of the Hibler model, in which the authors slightly adapted the regularisation of the strain rate from the original paper \cite{hibler} (essentially by replacing the cutoff of the strain rate with the maximum function by a regularisation of the type $\mathcal{D}_\epsilon$ from \eqref{strainrateregularisation}, as is also used in this paper, cf. \cite{kreyscher}). A proof of the same result, using different techniques, was later given in \cite{brandt2025}. The removal of the cutoff of the strain rate was studied recently for the 1D momentum equation of the Hibler model in \cite{liu2026}. In particular, the existence of a BV weak solution for the 1D momentum part of the Hibler model was established.

At the same time as \cite{liu}, the work \cite{brandt} studied a modified version of the Hibler model, in which the same momentum equation was used, but the mean ice thickness $h$ and the ice compactness $A$ are further regularised by adding diffusion (which lacks physical justification). Such models were studied in follow-up works \cite{binz2022,binz2024,brandt2023}. We also mention here, that the Hibler model with a different regularisation of the strain rate (by using the hyperbolic tangent) has been studied in \cite{chatta} (and see also \cite{chatta2025}). We note that in \cite{piersanti} a model for shallow (land) ice-sheets was analysed.

The first well-posedness results on the EVP model were obtained in the aforementioned paper \cite{boutros2025}, in which the global well-posedness of the EVP model was established in the case of a Voigt regularisation of the constitutive stress-tensor relation (i.e., the evolution equation for the stress tensor). This was the first global existence result for a sea-ice model for the case of large initial data. Moreover, in \cite{boutros2025} a rigorous treatment of the limit $\epsilon \rightarrow 0$ in the case of two dimensions was given. In \cite{boutros2025} (following \cite{hunke}), the advection term was ignored in the momentum balance (as it is essentially lower order), which is a very common assumption in sea-ice modelling (cf. \cite{lepparanta}). Using this approximation from \cite{hunke,boutros2025}, global existence results for the momentum equation of the Hibler model without the advection term were obtained in \cite{dingel,denk}. The results in \cite{dingel,denk} can be considered refinements of existing works on the total variation flow, see for example \cite{giga2001,giga2010,andreu2001,andreu2001minimizing,bellettini}. In \cite{mehlmann}, a formal $H^1$-estimate was derived for the revised EVP model (under restrictive assumptions), which was used to verify the consistency of numerical schemes for this model. 

Finally, we also remark that the analysis of the EVP model shares some similar features with the analysis of the Oldroyd-B model for non-Newtonian flows. The Oldroyd-B model has for example been studied in \cite{chemin,constantin2012,constantin2021,elgindi,guillope,kupferman,lin,lions}. The (Kelvin-)Voigt type regularisation has been used in several other contexts, for instance in the analysis of the Navier-Stokes equations and turbulence modelling (and also as a method to study steady states), see for example \cite{oskolkov1977,oskolkov1997,larios2010,larios2010boussinesq,larios2014,antontsev,constantin2023,ignatova}. Equations with the Voigt regularisation often belong to the class of pseudo-parabolic equations, which have been studied in \cite{showalter1970a,showalter1970b,showalter1975}.

\section{Preliminaries}
In this work, $\varphi : \mathbb{R}^2 \rightarrow \mathbb{R}$ will denote a standard nonnegative radial $C^\infty$ mollifier with compact support such that $\int_{\mathbb{R}^2} \varphi \dx = 1$. Moreover, for a parameter $\delta > 0$ we will define
\begin{equation*}
\varphi_\delta (x) \coloneqq \frac{1}{\delta^2} \varphi \left( \frac{x}{\delta} \right).
\end{equation*}
In this paper we will use the following notational convention
\begin{equation} \label{mollification}
f^\delta \coloneqq f * \varphi_\delta.
\end{equation}
We will use the notation $A \lesssim B$ to mean that there exists a constant $C$ such that $A \leq C B$. In general, throughout this paper the constant $C$ will not depend on the parameters $\beta$ (cf. system \eqref{regularisedkelvinvoigtsystem}) and $\epsilon$ that we will eventually send to zero in the proof (unless we will indicate otherwise). In order to estimate the contribution from the strain rates, we will use the following lemma, which was proved in \cite[Lemma 2.3]{boutros2025}.
\begin{lemma} \label{strainratelemma}
For $v_1, v_2 \in H^1 (\mathbb{T}^2)$ the following estimate holds
\begin{equation}
\left\lVert \sqrt{\lvert D(v_1) \rvert^2 + \epsilon^2} - \sqrt{\lvert D(v_2) \rvert^2 + \epsilon^2} \right\rVert_{L^2} \leq \left\lVert \nabla v_1 - \nabla v_2 \right\rVert_{L^2},
\end{equation}
for any $\epsilon \geq 0$.
\end{lemma}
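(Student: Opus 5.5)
The plan is to reduce the claim to a pointwise inequality and then integrate. For each fixed $x \in \mathbb{T}^2$ (more precisely, for a.e.\ $x$, where $\nabla v_1$ and $\nabla v_2$ are defined as $L^2$ functions), set $a = D(v_1)(x)$ and $b = D(v_2)(x)$. Both are elements of $\mathbb{R}^{2\times 2}_{\mathrm{sym}}$, equipped with the Euclidean (Frobenius) norm $\lvert\cdot\rvert$.

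First, I will show that the map $F_\epsilon : \mathbb{R}^{2\times 2} \to \mathbb{R}$, $A \mapsto \sqrt{\lvert A\rvert^2 + \epsilon^2}$, is $1$-Lipschitz. Indeed, $F_\epsilon(A) = \lvert (A,\epsilon)\rvert$ is the Euclidean norm of the augmented vector $(A,\epsilon) \in \mathbb{R}^{4}\times\mathbb{R}$. The reverse triangle inequality then gives
\begin{equation*}
\bigl\lvert F_\epsilon(a) - F_\epsilon(b)\bigr\rvert \leq \bigl\lvert (a,\epsilon) - (b,\epsilon)\bigr\rvert = \lvert a - b\rvert .
\end{equation*}
This holds for every $\epsilon \geq 0$, including $\epsilon = 0$.

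Second, I will use that the symmetric part is a contraction in the Frobenius norm. Write $D(v_1) - D(v_2) = \frac12 (M + M^\top)$ with $M = \nabla v_1 - \nabla v_2$. Since $\lvert M^\top\rvert = \lvert M\rvert$, the triangle inequality yields $\lvert D(v_1) - D(v_2)\rvert \leq \lvert \nabla v_1 - \nabla v_2\rvert$ pointwise. (Equivalently, $M \mapsto \tfrac12(M+M^\top)$ is an orthogonal projection.)

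Combining the two steps gives the pointwise bound
\begin{equation*}
\Bigl\lvert \sqrt{\lvert D(v_1)\rvert^2+\epsilon^2} - \sqrt{\lvert D(v_2)\rvert^2+\epsilon^2}\Bigr\rvert \leq \lvert \nabla v_1 - \nabla v_2\rvert \quad \text{a.e. in } \mathbb{T}^2 .
\end{equation*}
Squaring and integrating over $\mathbb{T}^2$ yields the claimed $L^2$ estimate.

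The only points needing care are technical rather than conceptual. Both sides are measurable and lie in $L^2$ because $v_i \in H^1$, and the left-hand side is in $L^2$ by the pointwise bound itself together with the fact that $\mathbb{T}^2$ has finite measure (so that the constant $\epsilon$ is in $L^2$). No step is a real obstacle; the key observation is the augmented-vector interpretation, which makes the Lipschitz constant exactly one, uniformly in $\epsilon$.
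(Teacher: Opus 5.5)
Your proof is correct and complete. The identity $\sqrt{\lvert A\rvert^2+\epsilon^2}=\lvert (A,\epsilon)\rvert$ in $\mathbb{R}^5$ gives the pointwise $1$-Lipschitz bound for every $\epsilon\ge 0$. The symmetrisation $M\mapsto\frac12(M+M^\top)$ is a contraction in the Frobenius norm. Squaring and integrating the combined pointwise inequality then yields the estimate.

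The paper gives no argument for this lemma; it quotes it from Lemma 2.3 of the authors' earlier Voigt-EVP paper. So there is no in-paper proof to compare against.

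A more common write-up splits your first step into two scalar steps. First, $s\mapsto\sqrt{s^2+\epsilon^2}$ has derivative $s/\sqrt{s^2+\epsilon^2}\in[-1,1]$. Second, the reverse triangle inequality gives $\bigl\lvert\,\lvert D(v_1)\rvert-\lvert D(v_2)\rvert\,\bigr\rvert\le\lvert D(v_1)-D(v_2)\rvert$. Your augmented-vector observation merges these into a single reverse triangle inequality, which also covers $\epsilon=0$ without separate care.

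Because your bound is pointwise, it gives the same estimate in every $L^p$, $1\le p\le\infty$, with constant one independent of $\epsilon$. That uniformity in $\epsilon$ is exactly what the uniqueness and continuous-dependence estimates need.
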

Throughout this paper, we will assume that the stress tensor $\sigma$ is symmetric. This assumption is made on the initial datum $\sigma_0$ and is preserved under the evolution, as we will state in the next lemma.
\begin{lemma}[Invariance of the symmetry of the stress tensor]
Let $(u,\sigma)$ be a solution to the advective Kelvin-Voigt EVP model \eqref{advectionsystem} or the Kelvin-Voigt EVP model \eqref{kelvinvoigtsystem} such that $u \in C([0,T];H^2 (\mathbb{T}^2))$ and $\sigma \in C([0,T]; H^1_w (\mathbb{T}^2)) \cap C([0,T]; L^2 (\mathbb{T}^2))$. Assume that $\sigma_0 \in H^1 (\mathbb{T}^2)$ is symmetric almost everywhere in $\mathbb{T}^2$, then $\sigma (x,t)$ is symmetric for all $t \in [0,T]$ and almost every $x \in \mathbb{T}^2$.
\end{lemma}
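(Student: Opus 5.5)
The plan is to derive a closed linear evolution equation for the antisymmetric part of $\sigma$ and show, by an energy estimate, that it stays zero. Set $\tau \coloneqq \sigma - \sigma^\top$, i.e.\ $\tau = 2\,\mathrm{skew}(\sigma)$. We transpose equation \eqref{advection2} (equivalently \eqref{kelvinvoigt2}) and subtract it from the original.

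The terms $\Tr \sigma\, \mathbb I_2$, $\frac{\mathcal D}{2}\mathbb I_2$ and $D(u)$ are all symmetric, so they cancel in the difference. The only surviving terms are the time derivative and $\frac{4\mathcal D}{P}\sigma$, which gives
\begin{equation*}
\frac{1}{\mathcal E}\partial_t \tau + \frac{4\mathcal{D}}{P}\,\tau = 0, \qquad \tau\lvert_{t=0} = \sigma_0 - \sigma_0^\top = 0 .
\end{equation*}
No advection term acts on $\sigma$ in either model, so the argument is identical for \eqref{advectionsystem} and \eqref{kelvinvoigtsystem}. The strain rate enters only as a given nonnegative coefficient; since $u \in C([0,T];H^2)$ and $H^2 \subset W^{1,p}$ for all $p<\infty$ in two dimensions, we have $\mathcal D = \lvert D(u)\rvert \in C([0,T];L^p)$ for every $p<\infty$. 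The same computation works verbatim for the regularised systems with $\mathcal D_\epsilon$.

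Next we justify the energy identity. Since $\sigma \in C([0,T];L^2)$, the equation shows that $\partial_t \tau = -\frac{4\mathcal E\mathcal D}{P}\tau$. This lies in $L^\infty(0,T;L^{q})$ for some $q<2$ (take $q=4/3$, say, by Hölder, as $\mathcal D\in L^4$ and $\tau\in L^2$). To avoid delicate pairings I would argue pointwise in $x$: for a.e.\ $x$, the map $t\mapsto\tau(x,t)$ is absolutely continuous and solves the linear ODE $\partial_t\tau = -a(x,t)\tau$ with $a = \frac{4\mathcal E\mathcal D}{P}\ge 0$ and $a(x,\cdot)\in L^1(0,T)$ for a.e.\ $x$. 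By Fubini, $\int_0^T\!\!\int \mathcal D \,dx\,dt<\infty$, so this integrability holds. Uniqueness for linear ODEs with integrable coefficients then yields
\begin{equation*}
\tau(x,t) = \tau(x,0)\exp\Big(-\int_0^t a(x,s)\,ds\Big) = 0 .
\end{equation*}

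Alternatively, one can test with $\tau$, using $\partial_t\frac12\lvert\tau\rvert^2 = -a\lvert\tau\rvert^2 \le 0$ pointwise, and integrate to get $\lVert\tau(t)\rVert_{L^2}\le\lVert\tau(0)\rVert_{L^2} = 0$.

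The main obstacle is this justification step: making sense of the stress equation as holding a.e.\ in $x$ with $\tau(x,\cdot)$ absolutely continuous, given only the stated regularity. I would handle it by writing the equation in integrated-in-time form in $L^1(\mathbb T^2)$, which is valid since all terms are in $L^1(0,T;L^1)$. Choosing a representative then gives the pointwise ODE for a.e.\ $x$. Finally, $\tau = 0$ a.e.\ for each $t$, and continuity of $t\mapsto\sigma(t)$ in $L^2$ upgrades this to all $t\in[0,T]$.
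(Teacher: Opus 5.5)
Your proposal is correct and follows the paper's argument. The paper also isolates the antisymmetric part $A(\sigma)=\frac12(\sigma-\sigma^\top)$ and observes that it satisfies $\frac{1}{\mathcal E}\partial_t A(\sigma)+\frac{4\mathcal D}{P}A(\sigma)=0$. It then uses the $L^2$ energy inequality (your ``alternative'') to conclude $\lVert A(\sigma)(t)\rVert_{L^2}\le\lVert A(\sigma_0)\rVert_{L^2}=0$, simply asserting that the equation holds in $C([0,T];L^2(\mathbb{T}^2))$; your pointwise-in-$x$ ODE formulation is a more carefully justified version of that same step.
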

\begin{proof}
Let $A(\sigma)$ denote the antisymmetric part of the stress tensor $\sigma$, i.e.,
\begin{equation}
A (\sigma) \coloneqq \frac{1}{2} (\sigma - \sigma^\top).
\end{equation}
It is clear that $A(\sigma)$ has the same regularity as $\sigma$. It follows from equation \eqref{advection2} or \eqref{kelvinvoigt2} that $A(\sigma)$ satisfies the following equation
\begin{equation}
\dfrac{1}{\mathcal E} \partial_t A(\sigma) + \dfrac{4 \mathcal{D}}{P} A(\sigma)  = 0.
\end{equation}
One can check that this equation holds in $C ([0,T]; L^2 (\mathbb{T}^2))$. Therefore we can take the $L^2 (\mathbb{T}^2)$-inner product with $A(\sigma)$, which gives (after integrating in time) 
\begin{equation}
\lVert A(\sigma) (\cdot, t) \rVert_{L^2}^2 \leq \lVert A(\sigma_0) \rVert_{L^2}^2 = 0,
\end{equation}
since $\mathcal{E} > 0$ and $\dfrac{4 \mathcal{D}}{P} \geq 0$. Consequently, the symmetry of the stress tensor is preserved by the evolution of the (advective) Kelvin-Voigt EVP model. A similar argument also applies for the regularised (advective) Kelvin-Voigt EVP model.
\end{proof}
In the proof of both Theorem \ref{localwellposednessthm} and \ref{globalwellposednessthm}, it will be crucial to use an $L^\infty (\mathbb{T}^2)$-estimate for the stress tensor $\sigma$, which we establish in the next lemma. 
\begin{lemma} \label{Lpestimatelemma}
Let $u \in C([0,T];H^1 (\mathbb{T}^2))$ and $\sigma \in C([0,T];H^1_w (\mathbb{T}^2)) \cap C([0,T]; L^2 (\mathbb{T}^2))$, with $\sigma_0 \in L^\infty (\mathbb{T}^2) \cap H^1 (\mathbb{T}^2)$, be two functions which satisfy the following equation (where $P > 0$ is a given constant)
\begin{equation}
\dfrac{1}{\mathcal E} \partial_t \sigma + \dfrac{4 \mathcal{D}_\epsilon}{P} (\sigma - \frac{1}{2} \Tr \sigma \mathbb I_2) + \dfrac{\mathcal{D}_\epsilon}{2P} \Tr \sigma \mathbb I_2 + \dfrac{\mathcal{D}_\epsilon}{2} \mathbb I_2  = D(u),
\end{equation}
which is the same as \eqref{advection2epsilon} and \eqref{kelvinvoigt2epsilon}. Then it holds that $\sigma \in L^\infty ((0,T); L^\infty (\mathbb{T}^2))$ and we have
\begin{equation}
\lVert \sigma (\cdot, t) \rVert_{L^\infty} \leq \lVert \sigma_0 \rVert_{L^\infty} + 2 P.
\end{equation}
\end{lemma}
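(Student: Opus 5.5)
The plan is to exploit the damping structure of the constitutive relation pointwise in $x$. We treat the equation as a family of ODEs in time, driven by $D(u)$ and damped at a rate proportional to $\mathcal{D}_\epsilon$. The key point is that the forcing $D(u)$ and the constant term $\frac{\mathcal{D}_\epsilon}{2}\mathbb I_2$ are both bounded by a multiple of $\mathcal{D}_\epsilon$. So the same factor $\mathcal{D}_\epsilon$ multiplies both the damping and the forcing, and the resulting bound is independent of $u$ and of $\epsilon$.

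\textbf{Step 1: decomposition and dissipation.} Write $\sigma = \tau + \frac12 p\,\mathbb I_2$ with $p = \Tr\sigma$ and $\tau$ trace-free. Then $|\sigma|^2 = |\tau|^2 + \frac12 p^2$, $\sigma:\tau = |\tau|^2$ and $\sigma:(p\,\mathbb I_2) = p^2$. Taking the Frobenius inner product of the equation with $\sigma$ gives
\begin{equation*}
\frac{1}{2\mathcal E}\partial_t|\sigma|^2 + \frac{4\mathcal D_\epsilon}{P}|\tau|^2 + \frac{\mathcal D_\epsilon}{2P}p^2 = \sigma : D(u) - \frac{\mathcal D_\epsilon}{2}\,p .
\end{equation*}
The dissipation satisfies $\frac{4\mathcal D_\epsilon}{P}|\tau|^2 + \frac{\mathcal D_\epsilon}{2P}p^2 \ge \frac{\mathcal D_\epsilon}{P}|\sigma|^2$. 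For the right-hand side, use $|D(u)|\le \mathcal D_\epsilon$ and $|p| \le \sqrt2|\sigma|$ to get
\begin{equation*}
\sigma : D(u) - \frac{\mathcal D_\epsilon}{2}\,p \le \left(1+\tfrac{1}{\sqrt2}\right)\mathcal D_\epsilon|\sigma| .
\end{equation*}

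\textbf{Step 2: comparison.} Formally this yields
\begin{equation*}
\partial_t|\sigma| \le \mathcal E\,\mathcal D_\epsilon\left( -\frac{|\sigma|}{P} + 1+\tfrac{1}{\sqrt2}\right).
\end{equation*}
Hence $|\sigma|$ cannot increase wherever $|\sigma| > (1+\frac1{\sqrt2})P$. This gives pointwise $|\sigma(x,t)| \le \max\{|\sigma_0(x)|, (1+\frac1{\sqrt2})P\} \le \|\sigma_0\|_{L^\infty} + 2P$. A cleaner way to write it is with $M := \max\{\|\sigma_0\|_{L^\infty}, (1+\frac1{\sqrt2})P\}$: show that $(|\sigma|^2 - M^2)_+$ has nonpositive time derivative. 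On the set where $|\sigma|>M$, the identity above makes $\partial_t |\sigma|^2 \le 0$.

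\textbf{Step 3: rigour (main obstacle).} With only $u\in C([0,T];H^1)$ and $\sigma \in C([0,T];L^2)$, the coefficient $\mathcal D_\epsilon$ is merely in $C([0,T];L^2)$, so pointwise manipulations need justification. The equation shows $\partial_t\sigma \in L^1_{loc}$ in space-time; more precisely $\partial_t\sigma\in C([0,T];L^1)$, since $\mathcal D_\epsilon\sigma\in L^1$. Hence for a.e. $x$, $t\mapsto\sigma(x,t)$ is absolutely continuous and the chain rule applies to the $C^1$ function $(|\sigma|^2-M^2)_+^{\,}$ (squared, or smoothed by a convex approximation). Alternatively, and probably closer to the paper's route, I would avoid pointwise issues by an $L^p$ estimate. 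Test the equation with $(|\sigma|^2-M^2)_+^{(p-2)/2}$ times $\sigma$, or simply with $|\sigma|^{p-2}\sigma$ after mollifying in space, and integrate over $\mathbb{T}^2$. The sign structure from Step 1 then shows that $\|(|\sigma|-M)_+\|_{L^p}$ is nonincreasing. Letting $p\to\infty$ gives the claimed bound for all $t$, using continuity in $L^2$ to pass from a.e. $t$ to every $t$. Controlling the commutator or integrability terms in this justification is the only delicate point. They are finite because $|\sigma|^{p-2}$ can be truncated at a level $K$, after which we let $K\to\infty$ by monotone convergence.
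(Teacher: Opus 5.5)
Your proof is correct, and it takes a genuinely different route from the paper's.

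\textbf{The paper's route.} It shifts to $\tau=\sigma+\frac{P}{2}\mathbb I_2$ to absorb the constant term, then pairs the equation over $\mathbb{T}^2$ with $P^{1-p}|\tau|^{p-2}\tau$. Young's inequality splits the forcing into $\frac1p\mathcal{D}_\epsilon+\frac{p-1}{p}\mathcal{D}_\epsilon|\tau/P|^p$, and the second piece is absorbed into the dissipation. This gives $\mathcal{E}^{-1}\lVert\tau/P\rVert_{L^p}^p\le\mathcal{E}^{-1}\lVert\tau_0/P\rVert_{L^p}^p+P^{-1}\int_0^t\int_{\mathbb{T}^2}\mathcal{D}_\epsilon$. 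The dependence on $u$ disappears only after taking $p$-th roots and letting $p\to\infty$.

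\textbf{Your route.} You prove a pointwise maximum principle: the ball of radius $\max\{|\sigma_0(x)|,(1+\frac{1}{\sqrt{2}})P\}$ is invariant for the ODE at a.e. fixed $x$. The first justification in your Step 3 is already complete. Since $\partial_t\sigma\in C([0,T];L^1)$, there is a representative that is absolutely continuous in $t$ for a.e. $x$, and the chain rule for $(|\sigma|^2-M^2)_+^2$ finishes the argument. You can drop the mollification/truncation alternative. In any case, testing with the untruncated $|\sigma|^{p-2}\sigma$ would not by itself give monotonicity of $\lVert(|\sigma|-M)_+\rVert_{L^p}$, because the right-hand side $\int\mathcal{D}_\epsilon|\sigma|^{p-1}(c-|\sigma|/P)$ has no sign.

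\textbf{Comparison.} Your argument is more elementary and needs less: only $\sigma\in C([0,T];L^2)$, so that $\mathcal{D}_\epsilon\sigma\in L^1$. The paper uses the $H^1$ bound on $\sigma$, i.e.\ integrability in every $L^q$ with $q<\infty$, to make sense of $\partial_t\tau\in C([0,T];L^{2-\gamma})$ and of the pairing with $|\tau|^{p-2}\tau$. In return, the paper works only with space-integrated quantities and never needs a representative that is absolutely continuous in time for a.e. $x$.

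Your argument also gives the sharper bound $\max\{\lVert\sigma_0\rVert_{L^\infty},(1+\frac{1}{\sqrt{2}})P\}$. Its constant cannot be lowered uniformly in $\epsilon$: it is the stationary value of $|\sigma|$, as $\epsilon\to0$, at points where $D(u)$ is a fixed negative multiple of $\mathbb I_2$. It is also the cleaner way to reach the stated constant $2P$. Undoing the paper's shift costs $2\cdot\frac{P}{2}|\mathbb I_2|=\sqrt{2}\,P$ in the Frobenius norm the paper uses. So the paper's final inequality for $\tau$ literally gives $\lVert\sigma_0\rVert_{L^\infty}+(1+\sqrt{2})P$ rather than $\lVert\sigma_0\rVert_{L^\infty}+2P$. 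This is harmless downstream, since only some uniform bound is used there.
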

\begin{proof}
We first introduce the new unknown
\begin{equation} \label{newunkown}
\tau \coloneqq \sigma + \frac{P}{2} \mathbb I_2,
\end{equation}
which satisfies the following equation
\begin{equation} \label{modifiedstressequation}
\dfrac{1}{\mathcal E} \partial_t \tau + \dfrac{4 \mathcal{D}_\epsilon}{P} (\tau - \frac{1}{2} \Tr \tau \mathbb I_2) + \dfrac{\mathcal{D}_\epsilon}{2P} \Tr \tau \mathbb I_2 = D(u).
\end{equation}
From the regularity of $u$ and $\tau$ it follows that $\partial_t \tau \in C([0,T]; L^{2 - \gamma} (\mathbb{T}^2))$ for any $\gamma \in (0,1)$. Therefore, for every $p > 3$, we can take the duality pairing of equation \eqref{modifiedstressequation} with $P^{-p+1} \tau \lvert \tau \rvert^{p-2}$, which gives
\begin{align*}
&\frac{P}{p \mathcal{E}} \frac{\textrm{d}}{\dt} \lVert \tau / P \rVert_{L^p}^p + \int_{\mathbb{T}^2} \bigg[ 4 \mathcal{D}_\epsilon \left( \frac{1}{P} \right)^p \bigg\lvert \tau - \frac{1}{2} \Tr \tau \mathbb I_2 \bigg\rvert^2 \lvert \tau \rvert^{p-2} + \dfrac{\mathcal{D}_\epsilon}{2} \left( \frac{1}{P} \right)^p \lvert \Tr \tau \rvert^2 \lvert \tau \rvert^{p-2} \bigg] \dx \\
&\leq \int_{\mathbb{T}^2} P^{-p+1} \mathcal{D}_\epsilon \lvert \tau \rvert^{p-1} \dx = \int_{\mathbb{T}^2} \mathcal{D}_\epsilon^{1/p} \cdot P^{-(p-1)} \mathcal{D}_\epsilon^{(p-1)/p} \lvert \tau \rvert^{p-1} \dx \\
&\leq \int_{\mathbb{T}^2} \bigg[\frac{1}{p} \mathcal{D}_\epsilon + \frac{p-1}{p} \mathcal{D}_\epsilon \left( \frac{1}{P} \right)^p \lvert \tau \rvert^p \bigg] \dx.
\end{align*}
Now, we recall the property
\begin{equation*}
\lvert \tau \rvert^2 = \bigg\lvert \tau - \frac{1}{2} \Tr \tau \mathbb I_2 \bigg\rvert^2 + \frac{1}{2} \lvert \Tr \tau \rvert^2.
\end{equation*}
Therefore, we can deduce the following estimate
\begin{align*}
\frac{P}{p \mathcal{E}} \frac{\textrm{d}}{\dt} \lVert \tau / P \rVert_{L^p}^p &+ \int_{\mathbb{T}^2} \bigg[ 3 \mathcal{D}_\epsilon \left( \frac{1}{P} \right)^p \bigg\lvert \tau - \frac{1}{2} \Tr \tau \mathbb I_2 \bigg\rvert^2 \lvert \tau \rvert^{p-2} + \dfrac{\mathcal{D}_\epsilon}{p} \left( \frac{1}{P} \right)^p \lvert \tau \rvert^p \bigg] \dx \leq \frac{1}{p} \int_{\mathbb{T}^2} \mathcal{D}_\epsilon \dx.
\end{align*}
Dropping the coercive terms (the positive terms on the left-hand side) and integrating in time leads to
\begin{align}
\dfrac{1}{\mathcal E} \lVert (\tau / P) (\cdot, t) \rVert_{L^p}^p \leq \dfrac{1}{\mathcal E} \lVert \tau_0 / P \rVert_{L^p}^p + \frac{1}{P} \int_0^t \int_{\mathbb{T}^2} \mathcal{D}_\epsilon \dx \dt' \leq \dfrac{1}{\mathcal E} \lVert \tau_0 / P \rVert_{L^p}^p + \frac{1}{P} \int_0^t (\lVert \nabla u \rVert_{L^2} + \epsilon) \dt' \label{integratedLpestimate}.
\end{align}
Now, taking the $p$-th root of equation \eqref{integratedLpestimate} and using the subadditivity property of the $p$-th root (i.e. $(x + y)^{1/p} \leq x^{1/p} + y^{1/p}$ for $x,y \geq 0$, $ p \geq 1 $) we have
\begin{equation} \label{integratedLpestimate2}
\dfrac{1}{\mathcal{E}^{1/p}} \lVert (\tau / P) (\cdot, t) \rVert_{L^p} \leq \dfrac{1}{\mathcal{E}^{1/p}} \lVert \tau_0 / P \rVert_{L^p} + \left( \frac{1}{P} \int_0^t (\lVert \nabla u \rVert_{L^2} + \epsilon) \dt' \right)^{1/p},
\end{equation}
which holds for every $p > 3$. Next, we recall the following result for finite-measure spaces: If $f \in L^p (\mathbb{T}^2)$ for any $p \in [p_0,\infty)$, for some $p_0 \geq 1$, and $\lVert f \rVert_{L^p} \leq K$ (for a constant $K$ independent of $p$), then $f \in L^\infty (\mathbb{T}^2)$ and $\lVert f \rVert_{L^\infty} \leq K$. Moreover, we have
\begin{equation} \label{Linftylimit}
\lVert f \rVert_{L^\infty} = \lim_{p \rightarrow \infty} \lVert f \rVert_{L^p},
\end{equation}
the proof of which can be found for example in \cite[Theorem 2.8]{adams}. 
Now sending $p \rightarrow \infty$ in equation \eqref{integratedLpestimate2} and using property \eqref{Linftylimit} we find
\begin{equation}
\lVert \tau (\cdot, t) \rVert_{L^\infty} \leq \lVert \tau_0 \rVert_{L^\infty} + P.
\end{equation}
\end{proof}
\section{Proof of Theorem \ref{localwellposednessthm}: The advective Kelvin-Voigt EVP model \eqref{advectionsystem}}
\subsection{Setup of the Galerkin approximation scheme}
As an intermediate step in the proof, we will use the regularisation of the strain rate given in equation \eqref{strainrateregularisation}. Therefore we will first construct a local strong solution to the following further regularised advective Kelvin-Voigt EVP model in $\mathbb{T}^2$ (for some parameters $\beta > 0$ and $\delta \in \left(0, \frac{1}{2}\right)$)
\begin{subequations} \label{regularisedkelvinvoigtsystem}
\begin{align}
	&\partial_t (u - \alpha^2 \Delta u + \beta^4 \Delta^2 u) + u \cdot \nabla u = \nabla \cdot \sigma + \mathcal{T}_a + \mathcal{T}_w + \Omega u^\perp - g \nabla H_0, \label{regularisedkelvinvoigt1} \\
	&\dfrac{1}{\mathcal E} \partial_t \sigma + \dfrac{4 \mathcal{D}_\epsilon}{P} (\sigma - \frac{1}{2} \Tr \sigma \mathbb I_2) + \dfrac{\mathcal{D}_\epsilon}{2P} \Tr \sigma \mathbb I_2 + \dfrac{\mathcal{D}_\epsilon}{2} \mathbb I_2  = D(u), \label{regularisedkelvinvoigt2}
	\\
	&u \lvert_{t = 0} = u_0^\delta, \quad \sigma \lvert_{t = 0} = \sigma_0^\delta, \label{regularisedkelvinvoigt3}
\end{align}
\end{subequations}
where the mollified initial data $u_0^\delta$ and $\sigma_0^\delta$ have been obtained from $u_0$ and $\sigma_0$ according to \eqref{mollification}. We will construct a solution to \eqref{regularisedkelvinvoigtsystem} by means of the Galerkin method. Note that we have introduced a regularisation term $\beta^4 \partial_t \Delta^2 u$ in the momentum balance in order to have sufficient regularity bounds in the interesting of constructing an approximate solution, to which we will then apply Lemma \ref{Lpestimatelemma} (as the $L^\infty$-bound from Lemma \ref{Lpestimatelemma} is not compatible with the Galerkin approximations). Using Lemma \ref{Lpestimatelemma} will then yield regularity bounds which are independent of $\beta > 0$. The introduction of the double regularisation term $\beta^4 \partial_t \Delta^2 u$ means also that we need to mollify the initial data, as we have done above. We consider a solution of the following form
\begin{equation*}
	u_N(x,t) = \sum_{k \in \mathbb{Z}^2, \lvert k \rvert \leq N} a_k (t) e^{2 \pi i k \cdot x}, \quad \sigma_N(x,t) = \sum_{k \in \mathbb{Z}^2, \lvert k \rvert \leq N} b_k(t) e^{2 \pi i k \cdot x},
\end{equation*}
which solves the Galerkin system of order $N$
\begin{align} 
	&\partial_t (u_N - \alpha^2 \Delta u_N + \beta^4 \Delta^2 u_N) + \mathbf{P}_N (u_N \cdot \nabla u_N)  \nonumber \\
    &= \nabla \cdot \sigma_N + \mathbf{P}_N \mathcal{T}_a + \mathbf{P}_N \mathcal{T}_w^N + \Omega u^\perp_N - g \mathbf{P}_N \nabla H_0, \label{galerkin1} \\ 
	&\frac{1}{\mathcal{E}} \partial_t \sigma_N + \mathbf{Q}_N \bigg[\frac{4 \mathcal{D}_\epsilon^N}{P} (\sigma_N - \frac{1}{2} \Tr \sigma_N \mathbb I_2) \bigg] + \mathbf{Q}_N \bigg[ \frac{\mathcal{D}_\epsilon^N}{2 P} \Tr \sigma_N \mathbb I_2 \bigg] + \frac{\mathbf{Q}_N \mathcal{D}_\epsilon^N}{2} \mathbb I_2 = D(u_N), 
	\label{galerkin2} \\
	&u_N \lvert_{t = 0} = \mathbf{P}_N u_0^\delta, \quad \sigma_N \lvert_{t = 0} = \mathbf{Q}_N \sigma_0^\delta, \label{galerkin3}
\end{align}
where we have introduced the following notation
\begin{align}
	\mathcal{T}_{w}^N &= c_w \rho_w \big\lvert U_w - u_N \big\rvert \bigg[ (U_w - u_N) \cos \theta + (U_w - u_N)^\perp \sin \theta \bigg] \label{galerkindrag} \\
	\mathcal{D}_\epsilon^N &= \sqrt{\lvert D(u_N) \rvert^2 + \epsilon^2 }.
\end{align}
In these equations, $\mathbf{P}_N$ is the $L^2 (\mathbb{T}^2)$-projection of two-dimensional vector fields onto their Fourier modes up to order $N$, the map $\mathbf{Q}_N$ is the $L^2 (\mathbb{T}^2)$-projection of $2 \times 2$ symmetric matrix fields onto their Fourier modes up to order $N$. Note that by the Picard-Lindelöf theorem, the Galerkin ODE system \eqref{galerkin1}-\eqref{galerkin3} has a local-in-time solution. 
\subsection{Construction of a solution for the regularised system \eqref{regularisedkelvinvoigtsystem}} \label{regularisedsystemsection}
We will now derive the a priori estimates (which are uniform in $N$ and $\epsilon$) that will be used to construct the solution of the system \eqref{regularisedkelvinvoigtsystem}, after which we will send $\beta, \delta \rightarrow 0$ in Section \ref{limitsection} (and then $\epsilon \rightarrow 0$ after that). Taking the $L^2 (\mathbb{T}^2)$ inner products of equations \eqref{galerkin1} and \eqref{galerkin2} with $u_N$ and $-\Delta u_N$, and respectively with $\sigma_N$ and $-\Delta \sigma_N$, and adding the resultants leads to
\begin{align*}
&\frac{1}{2} \frac{\textrm{d}}{\dt} \bigg[ \lVert u_N \rVert_{H^1}^2 + \mathcal{E}^{-1} \lVert \sigma_N \rVert_{H^1}^2 + \alpha^2 \lVert \nabla u_N \rVert_{H^1}^2 + \beta^4 \lVert \Delta u_N \rVert_{H^1}^2 \bigg] \\
&= \int_{\mathbb{T}^2} \bigg[ (\nabla \cdot \sigma_N) \cdot (u_N - \Delta u_N) + D(u_N) : (\sigma_N - \Delta \sigma_N) \bigg] \dx \\
&- \int_{\mathbb{T}^2} \big[(u_N \cdot \nabla) u_N \big] \cdot \big[u_N - \Delta u_N \big] \dx \\
&+ \int_{\mathbb{T}^2} \bigg[ \mathbf{P}_N \mathcal{T}_a + \mathbf{P}_N \mathcal{T}_w^N + \Omega u^\perp_N - g \mathbf{P}_N \nabla H_0 \big] \cdot \big[u_N - \Delta u_N \big] \dx \\
&- \int_{\mathbb{T}^2} \bigg[ \frac{4 \mathcal{D}_\epsilon^N}{P} (\sigma_N - \frac{1}{2} \Tr \sigma_N \mathbb I_2) + \frac{\mathcal{D}_\epsilon^N}{2 P} \Tr \sigma_N \mathbb I_2 + \frac{\mathcal{D}_\epsilon^N}{2} \mathbb I_2 \bigg] \cdot \big(\sigma_N - \Delta \sigma_N \big) \dx \\
&\eqqcolon I_1 + I_2 + I_3 + I_4.
\end{align*}
Now we treat the various contributions $I_1, \ldots, I_4$ separately. We have
\begin{align*}
I_1 &= \int_{\mathbb{T}^2} \bigg[(\nabla \cdot \sigma_N) \cdot u_N + \sigma_N : D(u_N) \bigg] \dx + \sum_{i=1}^2 \int_{\mathbb{T}^2} \bigg[ (\nabla \cdot \partial_i \sigma_N) \cdot \partial_i u_N + D(\partial_i u_N) : \partial_i \sigma_N \bigg] \dx \\
&= 0,
\end{align*}
which follows from the divergence theorem. We estimate the contribution $I_2$ from the advection term as follows
\begin{align*}
\lvert I_2 \rvert &\lesssim \lVert u_N \rVert_{L^4}^2 \lVert \nabla u_N \rVert_{L^2} + \lVert u_N \rVert_{L^4} \lVert \nabla u_N \rVert_{L^4} \lVert \Delta u_N \rVert_{L^2} \lesssim \lVert u_N \rVert_{H^2}^3, 
\end{align*}
where we have used the Sobolev embedding theorem. Next, we estimate the contribution $I_3$ from the drag forces, i.e., in view of \eqref{atmosphericdrag} and \eqref{galerkindrag}, we have
\begin{align*}
\lvert I_3 \rvert &\lesssim \lVert u_N \rVert_{H^2} \bigg[ \lVert U_a \rVert_{L^4}^2 + \lVert U_w \rVert_{L^4}^2 + \lVert u_N \rVert_{L^4}^2 + \lVert H_0 \rVert_{H^1} \bigg].
\end{align*}
In order to treat the remaining term $I_4$, we decompose this term in the following manner
\begin{align*}
&- \int_{\mathbb{T}^2} \bigg[ \frac{4 \mathcal{D}_\epsilon^N}{P} (\sigma_N - \frac{1}{2} \Tr \sigma_N \mathbb I_2) + \frac{\mathcal{D}_\epsilon^N}{2 P} \Tr \sigma_N \mathbb I_2 + \frac{\mathcal{D}_\epsilon^N}{2} \mathbb I_2 \bigg] \cdot \big(\sigma_N - \Delta \sigma_N \big) \dx \\
&= - \int_{\mathbb{T}^2} \bigg[ \frac{4 \mathcal{D}_\epsilon^N}{P} (\sigma_N - \frac{1}{2} \Tr \sigma_N \mathbb I_2) + \frac{\mathcal{D}_\epsilon^N}{2 P} \Tr \sigma_N \mathbb I_2 + \frac{\mathcal{D}_\epsilon^N}{2} \mathbb I_2 \bigg] \cdot \sigma_N \dx \\
&+ \int_{\mathbb{T}^2} \bigg[ \frac{4 \mathcal{D}_\epsilon^N}{P} (\sigma_N - \frac{1}{2} \Tr \sigma_N \mathbb I_2) + \frac{\mathcal{D}_\epsilon^N}{2 P} \Tr \sigma_N \mathbb I_2 + \frac{\mathcal{D}_\epsilon^N}{2} \mathbb I_2 \bigg] \cdot \Delta \sigma_N \dx \eqqcolon I_{41} + I_{42}.
\end{align*}
We use the Cauchy-Schwarz inequality to obtain
\begin{align*}
\lvert I_{41} \rvert &= \bigg\lvert \int_{\mathbb{T}^2} \bigg[ \frac{4 \mathcal{D}_\epsilon^N}{P} (\sigma_N - \frac{1}{2} \Tr \sigma_N \mathbb I_2) + \frac{\mathcal{D}_\epsilon^N}{2 P} \Tr \sigma_N \mathbb I_2 + \frac{\mathcal{D}_\epsilon^N}{2} \mathbb I_2 \bigg] \cdot \sigma_N \dx \bigg\rvert \\
&\lesssim \lVert \mathcal{D}_\epsilon^N \rVert_{L^2} (1 + \lVert \sigma_N \rVert_{L^4}^2) \lesssim (\lVert \nabla u_N \rVert_{L^2} + 1) (1 + \lVert \sigma_N \rVert_{L^4}^2).
\end{align*}
In order to estimate $I_{42}$ (which will involve $\nabla \mathcal{D}_\epsilon^N$), we notice that
\begin{equation*}
\nabla \mathcal{D}_\epsilon^N = \frac{D(u_N) : \big[ \nabla D(u_N) \big]}{\mathcal{D}_\epsilon^N}.
\end{equation*}
This then leads to the following estimates
\begin{equation} \label{strainrateestimates}
\lVert \nabla \mathcal{D}_\epsilon^N \rVert_{L^2} \lesssim \lVert \Delta u_N \rVert_{L^2}, \quad \lVert \nabla \mathcal{D}_\epsilon^N \rVert_{L^4} \lesssim \lVert \Delta u_N \rVert_{L^4},
\end{equation}
where the constants in the right-hand sides of \eqref{strainrateestimates} are independent of $\epsilon$ and $N$. We then deduce that
\begin{align*}
\lvert I_{42} \rvert &= \bigg\lvert \int_{\mathbb{T}^2} \bigg[ \frac{4 \mathcal{D}_\epsilon^N}{P} (\sigma_N - \frac{1}{2} \Tr \sigma_N \mathbb I_2) + \frac{\mathcal{D}_\epsilon^N}{2 P} \Tr \sigma_N \mathbb I_2 + \frac{\mathcal{D}_\epsilon^N}{2} \mathbb I_2 \bigg] \cdot \Delta \sigma_N \dx \bigg\rvert \\
&= \bigg\lvert \int_{\mathbb{T}^2} \bigg[ \frac{4 \nabla \mathcal{D}_\epsilon^N}{P} (\sigma_N - \frac{1}{2} \Tr \sigma_N \mathbb I_2) + \frac{\nabla \mathcal{D}_\epsilon^N}{2 P} \Tr \sigma_N \mathbb I_2 + \frac{4 \mathcal{D}_\epsilon^N}{P} \nabla (\sigma_N - \frac{1}{2} \Tr \sigma_N \mathbb I_2) \\
&+ \frac{\mathcal{D}_\epsilon^N}{2 P} \nabla \Tr \sigma_N \mathbb I_2 + \frac{\nabla \mathcal{D}_\epsilon^N}{2} \mathbb I_2 \bigg] \cdot \nabla \sigma_N \dx \bigg\rvert \\
&\lesssim \lVert \Delta u_N \rVert_{L^4} \lVert \nabla \sigma_N \rVert_{L^2} \lVert \sigma_N \rVert_{L^4} + \lVert \nabla u_N \rVert_{L^\infty} \lVert \nabla \sigma_N \rVert_{L^2}^2 + \lVert \Delta u_N \rVert_{L^2} \lVert \nabla \sigma_N \rVert_{L^2} \\
&\lesssim \lVert u_N \rVert_{H^3} \lVert \sigma_N \rVert_{H^1}^2 + \lVert u_N \rVert_{H^2} \lVert \sigma_N \rVert_{H^1}.
\end{align*}
By combining the estimates on $I_1, I_2, I_3$ and $I_4$ (i.e. $I_{41}$ and $I_{42}$), we therefore find that
\begin{align*}
&\frac{1}{2} \frac{\textrm{d}}{\dt} \bigg[ \lVert u_N \rVert_{H^1}^2 + \mathcal{E}^{-1} \lVert \sigma_N \rVert_{H^1}^2 + \alpha^2 \lVert \nabla u_N \rVert_{H^1}^2 + \beta^4 \lVert \Delta u_N \rVert_{H^1}^2 \bigg] \lesssim \big[ \lVert u_N \rVert_{H^3}^2 + \lVert \sigma_N \rVert_{H^1}^2 + 1 \big]^{3/2},
\end{align*}
where we have used Young's inequality. Therefore, we deduce that there exists a time $T > 0$ and some constant $K \in (0, \infty)$ such that
\begin{align*}
\lVert u_N \rVert_{L^\infty ((0,T); H^3 (\mathbb{T}^2))} &+ \lVert \sigma_N \rVert_{L^\infty ((0,T); H^1 (\mathbb{T}^2))} + \lVert \partial_t u_N \rVert_{L^\infty ((0,T); H^4 (\mathbb{T}^2))} \\
&+ \lVert \partial_t \sigma_N \rVert_{L^\infty ((0,T); H^1 (\mathbb{T}^2))} \leq K < \infty.
\end{align*}
Note that both $T$ and $K$ are independent of $N$ and $\epsilon$, but they do depend on $\alpha$, $\beta$, $\delta$, $u_0$, $\sigma_0$, $c_a$, $c_w$, $\rho_a$, $\rho_w$, $\mathcal{E}$, $g$, $H_0$, $P$, $\phi$, $\theta$, $U_a$ and $U_w$.

Therefore, by applying the Banach-Alaoglu and Aubin-Lions compactness theorems, we deduce that there exists a limit $(u,\sigma) \in C([0,T];H^3 (\mathbb{T}^2)) \times C([0,T];H^1(\mathbb{T}^2))$ for which we have the following convergence results as $N \rightarrow \infty$ (by passing to a subsequence, if necessary)
\begin{align}
    u_N & \overset{\ast}{\rightharpoonup} u &  \text{weakly-$* $ in}& ~ L^\infty ((0,T);H^3 (\mathbb{T}^2)), \\
    \partial_t u_N & \overset{\ast}{\rightharpoonup} \partial_t u &  \text{weakly-$* $ in}& ~ L^\infty ((0,T);H^4 (\mathbb{T}^2)),  \\
    u_N & \rightarrow u & \text{strongly in}& ~ C([0,T];H^2(\mathbb T^2)), \\
    \sigma_N &\overset{\ast}{\rightharpoonup} \sigma & \text{weakly-$* $ in}& ~ L^\infty ((0,T);H^1 (\mathbb{T}^2)), \\
    \sigma_N &\rightarrow \sigma & \text{strongly in}& ~ C([0,T];L^2(\mathbb T^2)),\\
    \partial_t \sigma_N & \overset{\ast}{\rightharpoonup} \partial_t \sigma & \text{weakly-$* $ in}& ~ L^\infty ((0,T);H^1 (\mathbb{T}^2)).
\end{align}
It is straightforward to check that $(u,\sigma)$ is a solution of the regularised advective Kelvin-Voigt EVP system \eqref{regularisedkelvinvoigtsystem}. 
\subsection{Existence of a local strong solution for the advective Kelvin-Voigt EVP model \eqref{advectionsystem}} \label{limitsection}
In order to prove the existence of a solution to the advective Kelvin-Voigt EVP model \eqref{advectionsystem}, we will now obtain energy estimates on the regularised system \eqref{regularisedkelvinvoigtsystem} which are uniform in $\beta$ and $\delta$. By the existence result from Section \ref{regularisedsystemsection}, there exists a sequence $\{ (u_{\beta,\delta}, \sigma_{\beta,\delta} ) \} $ which solves the regularised system \eqref{regularisedkelvinvoigtsystem} for any $\beta, \delta > 0$ on a time interval of existence $[0,T_{\beta,\delta}]$. The existence time $T_{\beta,\delta}$ from Section \ref{regularisedsystemsection} is independent of $N$ and $\epsilon$, but it does depend on $\alpha$, $\beta$, $\delta$, $\mathcal{E}$, $P$, $u_0$, $\sigma_0$ and the parameters of the external forces. We need to show that these solutions have a uniform time of existence $T$ (i.e., independent of $\beta$ and $\delta$) and regularity bounds as $\beta,\delta \rightarrow 0$. By applying Lemma \ref{Lpestimatelemma}, we have the following uniform estimate
\begin{equation} \label{Linftyestimate}
\lVert \sigma_{\beta,\delta} (\cdot, t) \rVert_{L^\infty} \leq \lVert \sigma_0^\delta \rVert_{L^\infty} + 2 P.
\end{equation}
By proceeding analogously as in Section \ref{regularisedsystemsection}, one finds (for a time $t \in [0, T_{\beta,\delta}]$)
\begin{align*}
&\frac{1}{2} \frac{\textrm{d}}{\dt} \bigg[ \lVert u_{\beta,\delta} \rVert_{H^1}^2 + \mathcal{E}^{-1} \lVert \sigma_{\beta,\delta} \rVert_{H^1}^2 + \alpha^2 \lVert \nabla u_{\beta,\delta} \rVert_{H^1}^2 + \beta^4 \lVert \Delta u_{\beta,\delta} \rVert_{H^1}^2 \bigg] \\
&= \int_{\mathbb{T}^2} \bigg[ (\nabla \cdot \sigma_{\beta,\delta}) \cdot (u_{\beta,\delta} - \Delta u_{\beta,\delta}) + D(u_{\beta,\delta}) : (\sigma_{\beta,\delta} - \Delta \sigma_{\beta,\delta}) \bigg] \dx \\
&- \int_{\mathbb{T}^2} \big[(u_{\beta,\delta} \cdot \nabla) u_{\beta,\delta} \big] \cdot \big[u_{\beta,\delta} - \Delta u_{\beta,\delta} \big] \dx \\
&+ \int_{\mathbb{T}^2} \bigg[ \mathcal{T}_a + \mathcal{T}_w + \Omega u^\perp_{\beta,\delta} - g \nabla H_0 \big] \cdot \big[u_{\beta,\delta} - \Delta u_{\beta,\delta} \big] \dx \\
&- \int_{\mathbb{T}^2} \bigg[ \frac{4 \mathcal{D}_\epsilon}{P} (\sigma_{\beta,\delta} - \frac{1}{2} \Tr \sigma_{\beta,\delta} \mathbb I_2) + \frac{\mathcal{D}_\epsilon}{2 P} \Tr \sigma_{\beta,\delta} \mathbb I_2 + \frac{\mathcal{D}_\epsilon}{2} \mathbb I_2 \bigg] \cdot \big(\sigma_{\beta,\delta} - \Delta \sigma_{\beta,\delta} \big) \dx \\
&\eqqcolon I_5 + I_6 + I_7 + I_8.
\end{align*}
By using the same estimates as in Section \ref{regularisedsystemsection}, we obtain that
\begin{align*}
I_5 &= 0, \\
\lvert I_6 \rvert &\lesssim \lVert u_{\beta,\delta} \rVert_{H^2}^3, \\
\lvert I_7 \rvert &\lesssim \lVert u_{\beta,\delta} \rVert_{H^2} \bigg[ \lVert U_a \rVert_{L^4}^2 + \lVert U_w \rVert_{L^4}^2 + \lVert u_{\beta,\delta} \rVert_{L^4}^2 + \lVert H_0 \rVert_{H^1} \bigg].
\end{align*}
Similarly, we get
\begin{align*}
&\bigg\lvert \int_{\mathbb{T}^2} \bigg[ \frac{4 \mathcal{D}_\epsilon}{P} (\sigma_{\beta,\delta} - \frac{1}{2} \Tr \sigma_{\beta,\delta} \mathbb I_2) + \frac{\mathcal{D}_\epsilon}{2 P} \Tr \sigma_{\beta,\delta} \mathbb I_2 + \frac{\mathcal{D}_\epsilon}{2} \mathbb I_2 \bigg] \cdot \sigma_{\beta,\delta} \dx \bigg\rvert \\
&\lesssim (\lVert \nabla u_{\beta,\delta} \rVert_{L^2} + 1) (1 + \lVert \sigma_{\beta,\delta} \rVert_{L^4}^2).
\end{align*}
What remains to be shown is the estimate on the second term of $I_8$ (involving $\Delta \sigma_{\beta,\delta}$). We have
\begin{align*}
&\int_{\mathbb{T}^2} \bigg[ \frac{4 \mathcal{D}_\epsilon}{P} (\sigma_{\beta,\delta} - \frac{1}{2} \Tr \sigma_{\beta,\delta} \mathbb I_2) + \frac{\mathcal{D}_\epsilon}{2 P} \Tr \sigma_{\beta,\delta} \mathbb I_2 + \frac{\mathcal{D}_\epsilon}{2} \mathbb I_2 \bigg] \cdot \Delta \sigma_{\beta,\delta} \dx \\
&= - \int_{\mathbb{T}^2} \bigg[ \frac{4 \nabla \mathcal{D}_\epsilon}{P} (\sigma_{\beta,\delta} - \frac{1}{2} \Tr \sigma_{\beta,\delta} \mathbb I_2) + \frac{\nabla \mathcal{D}_\epsilon}{2 P} \Tr \sigma_{\beta,\delta} \mathbb I_2  + \frac{\nabla \mathcal{D}_\epsilon}{2} \mathbb I_2 \bigg] \cdot \nabla \sigma_{\beta,\delta} \dx \\
&- \int_{\mathbb{T}^2} \bigg[ \frac{4 \mathcal{D}_\epsilon}{P} \lvert \nabla (\sigma_{\beta,\delta} - \frac{1}{2} \Tr \sigma_{\beta,\delta} \mathbb I_2 ) \rvert^2 + \frac{\mathcal{D}_\epsilon}{2 P} \lvert \nabla (\Tr \sigma_{\beta,\delta} ) \rvert^2 \bigg] \dx.
\end{align*}
We note that the term in the third line is negative and hence can be disregarded in the energy estimate. By using inequality \eqref{Linftyestimate}, we can obtain the following bound on the term from the second line
\begin{align*}
&\bigg\lvert \int_{\mathbb{T}^2} \bigg[ \frac{4 \nabla \mathcal{D}_\epsilon}{P} (\sigma_{\beta,\delta} - \frac{1}{2} \Tr \sigma_{\beta,\delta} \mathbb I_2) + \frac{\nabla \mathcal{D}_\epsilon}{2 P} \Tr \sigma_{\beta,\delta} \mathbb I_2  + \frac{\nabla \mathcal{D}_\epsilon}{2} \mathbb I_2 \bigg] \cdot \nabla \sigma_{\beta,\delta} \dx \bigg\rvert \\
&\lesssim \lVert u_{\beta,\delta} \rVert_{H^2} (\lVert \sigma_{\beta,\delta} \rVert_{L^\infty} + 1) \lVert \sigma_{\beta,\delta} \rVert_{H^1} \lesssim \lVert u_{\beta,\delta} \rVert_{H^2} (\lVert \sigma_0^\delta \rVert_{L^\infty} + 1 + 2 P) \lVert \sigma_{\beta,\delta} \rVert_{H^1} \\
&\lesssim \lVert u_{\beta,\delta} \rVert_{H^2} (\lVert \sigma_0 \rVert_{L^\infty} + 1 + 2 P) \lVert \sigma_{\beta,\delta} \rVert_{H^1},
\end{align*}
where we have used Theorem C.16 in \cite{leoni} in order to bound $\lVert \sigma_0^\delta \rVert_{L^\infty}$ by $\lVert \sigma_0 \rVert_{L^\infty}$. Therefore, by combining the estimates on $I_5, I_6, I_7$ and $I_8$ we obtain that
\begin{align*}
&\frac{1}{2} \frac{\textrm{d}}{\dt} \bigg[ \lVert u_{\beta,\delta} \rVert_{H^1}^2 + \mathcal{E}^{-1} \lVert \sigma_{\beta,\delta} \rVert_{H^1}^2 + \alpha^2 \lVert \nabla u_{\beta,\delta} \rVert_{H^1}^2 + \beta^4 \lVert \Delta u_{\beta,\delta} \rVert_{H^1}^2 \bigg] \\
&\lesssim \lVert u_{\beta,\delta} \rVert_{H^2} \bigg[ \lVert U_a \rVert_{L^4}^2 + \lVert U_w \rVert_{L^4}^2 + \lVert u_{\beta,\delta} \rVert_{H^2}^2 + \lVert H_0 \rVert_{H^1} \bigg] + 1 + \lVert \sigma_{\beta,\delta} \rVert_{H^1}^2,
\end{align*}
where we note that the constants on the right-hand side of this estimate are independent of $\beta$ and $\delta$. Therefore, we find that there exists a time $T > 0$ and a constant $K \in (0,\infty)$, which do not depend on $\beta$, $\delta$, $\epsilon$ and $N$, such that
\begin{align*}
&\lVert u_{\beta,\delta} \rVert_{L^\infty ((0,T); H^2 (\mathbb{T}^2))} + \lVert \sigma_{\beta,\delta} \rVert_{L^\infty ((0,T); H^1 (\mathbb{T}^2))} + \lVert \sigma_{\beta,\delta} \rVert_{L^\infty ((0,T); L^\infty (\mathbb{T}^2))} \\
&+ \lVert \partial_t u_{\beta,\delta} \rVert_{L^\infty ((0,T); H^2 (\mathbb{T}^2))} + \lVert \partial_t \sigma_{\beta,\delta} \rVert_{L^\infty ((0,T); L^2 (\mathbb{T}^2))} \leq K < \infty.
\end{align*}
The time $T$ and the constant $K$ do depend on $\alpha$, $u_0$, $\sigma_0$, $c_a$, $c_w$, $\rho_a$, $\rho_w$, $\mathcal{E}$, $g$, $H_0$, $P$, $\phi$, $\theta$, $U_a$ and $U_w$.

Hence, by again applying the Aubin-Lions and Banach-Alaoglu compactness theorems, we find that as $\beta, \delta \rightarrow 0$ (by passing to a subsequence, if required)
\begin{align}
    u_{\beta,\delta} & \overset{\ast}{\rightharpoonup} u &  \text{weakly-$* $ in}& ~ L^\infty ((0,T);H^2 (\mathbb{T}^2)), \\
    \partial_t u_{\beta,\delta} & \overset{\ast}{\rightharpoonup} \partial_t u &  \text{weakly-$* $ in}& ~ L^\infty ((0,T);H^2 (\mathbb{T}^2)),  \\
    u_{\beta,\delta} & \rightarrow u & \text{strongly in}& ~ C([0,T];H^1(\mathbb T^2)), \\
    \sigma_{\beta,\delta} &\overset{\ast}{\rightharpoonup} \sigma & \text{weakly-$* $ in}& ~ L^\infty ((0,T);H^1 (\mathbb{T}^2)), \\
    \sigma_{\beta,\delta} &\rightarrow \sigma & \text{strongly in}& ~ C([0,T];L^p(\mathbb T^2)),\\
    \partial_t \sigma_{\beta,\delta} & \overset{\ast}{\rightharpoonup} \partial_t \sigma & \text{weakly-$* $ in}& ~ L^\infty ((0,T); L^2 (\mathbb{T}^2)),
\end{align}
for any $p \in [1, \infty)$. As all the estimates are uniform in $\epsilon$, one can take another limit and send $\epsilon \rightarrow 0$. One can check that the resulting limit $(u,\sigma)$ satisfies the advective Kelvin-Voigt EVP model \eqref{advectionsystem}, in particular it attains the initial conditions.
\subsection{Continuous dependence on the initial data and uniqueness of the strong solution} \label{uniquenesssection}
In order to conclude the proof of Theorem \ref{localwellposednessthm}, we need to show the uniqueness of the strong solution which we constructed in the previous sections. Suppose there exist two solutions $(u_1, \sigma_1)$ and $(u_2, \sigma_2)$ to the advective Kelvin-Voigt EVP model \eqref{advectionsystem}, which obey the initial conditions $(u_{1,0}, \sigma_{1,0})$ and $(u_{2,0}, \sigma_{2,0})$. Moreover, they satisfy $u_i \in C([0,T_i];H^2 (\mathbb{T}^2))$ and $\sigma_i \in C([0,T_i]; H^1_w (\mathbb{T}^2)) \cap C([0,T_i]; L^2 (\mathbb{T}^2))$ on the time intervals of existence $[0,T_i]$ for $i=1,2$. Then we consider the difference between the two solutions
\begin{equation*}
\delta u \coloneqq u_1 - u_2, \quad \delta \sigma \coloneqq \sigma_1 - \sigma_2. 
\end{equation*}
The difference $(\delta u, \delta \sigma)$ satisfies the following system of equations (on the time interval $[0, \min \{ T_1, T_2 \}]$)
\begin{align}
\partial_t (\delta u - \alpha^2 \Delta \delta u) + u_1 \cdot \nabla \delta u + \delta u \cdot \nabla u_2  = \nabla \cdot \delta \sigma + \mathcal{T}_{w,1} - \mathcal{T}_{w,2} + \Omega (\delta u)^\perp ,  \\
\frac{1}{\mathcal{E}} \partial_t \delta \sigma + \frac{4 \mathcal D_1}{P}(\sigma_1 - \frac{1}{2} \Tr \sigma_1 \mathbb I_2) - \frac{4 \mathcal D_2}{P}(\sigma_2 - \frac{1}{2} \Tr \sigma_2 &\mathbb I_2 ) \nonumber \\
+ \frac{\mathcal D_1}{2P} \Tr \sigma_1 \mathbb I_2 - \frac{\mathcal D_2}{2P} \Tr \sigma_2 \mathbb I_2 
+ \frac{\mathcal{D}_1 - \mathcal{D}_2}{2} &\mathbb I_2 = D(\delta u),
\end{align}
where in the above we have used the following notation (for $i=1,2$)
\begin{align}
\mathcal{T}_{w,i} &= c_w \rho_w \lvert U_w - u_i \rvert \big[ (U_w - u_i) \cos \theta + (U_w - u_i)^\perp \sin \theta \big], \\
\mathcal{D}_i &= \lvert D(u_i) \rvert.
\end{align} 
Then one can find the following equality (again for $t \in [0, \min \{ T_1, T_2 \}]$)
\begin{align*}
&\frac{1}{2} \frac{\rm d}{\dt} \bigg[ \lVert \delta u \rVert_{L^2}^2 + \alpha^2 \lVert \nabla \delta u \rVert_{L^2}^2 + \mathcal{E}^{-1} \lVert \delta \sigma \rVert_{L^2}^2 \bigg] \\
&= - \int_{\mathbb{T}^2} \big[ u_1 \cdot \nabla \delta u + \delta u \cdot \nabla u_2 \big] \cdot \delta u \dx - \int_{\mathbb{T}^2} \frac{\mathcal{D}_1 - \mathcal{D}_2}{2} \Tr \delta \sigma \dx \\
&- \int_{\mathbb{T}^2} \frac{4 (\mathcal{D}_1 - \mathcal{D}_2)}{P}(\sigma_1 - \frac{1}{2} \Tr \sigma_1 \mathbb I_2) : \delta \sigma \dx - \int_{\mathbb{T}^2} \frac{4 \mathcal D_2}{P}(\delta \sigma - \frac{1}{2} \Tr \delta \sigma \mathbb I_2) : \delta \sigma \dx \\
&- \int_{\mathbb T^2} \frac{(\mathcal{D}_1 - \mathcal{D}_2)}{2P} \Tr \sigma_1 \mathbb I_2 : \delta \sigma \dx - \int_{\mathbb T^2} \frac{\mathcal D_2}{2P} \Tr \delta \sigma \mathbb I_2 : \delta \sigma \dx + \int_{\mathbb{T}^2} \big( \mathcal{T}_{w,1} - \mathcal{T}_{w,2} \big) \cdot \delta u \dx \\
&\eqqcolon J_1 + J_2 + J_3 + J_4 + J_5 + J_6 + J_7.
\end{align*}
We now estimate the contributions from the various terms $J_1, \ldots, J_7$. By using Lemma \ref{strainratelemma}, we have
\begin{align*}
\lvert J_1 \rvert &\leq \lVert u_1 \rVert_{L^\infty} \lVert \nabla \delta u \rVert_{L^2} \lVert \delta u \rVert_{L^2} + \lVert \delta u \rVert_{L^4} \lVert \nabla u_2 \rVert_{L^4} \lVert \delta u \rVert_{L^2}, \\
\lvert J_2 \rvert &\leq \lVert \mathcal{D}_1 - \mathcal{D}_2 \rVert_{L^2} \lVert \delta \sigma \rVert_{L^2} \leq \lVert \nabla \delta u \rVert_{L^2} \lVert \delta \sigma \rVert_{L^2}, \\
\lvert J_3 \rvert &\leq \bigg\lvert \int_{\mathbb{T}^2} \bigg[ \frac{4 (\mathcal{D}_1 - \mathcal{D}_2)}{P}(\sigma_1 - \frac{1}{2} \Tr \sigma_1 \mathbb I_2)  \bigg] : \delta \sigma \dx \bigg\rvert \lesssim \lVert \nabla \delta u \rVert_{L^2} \lVert \sigma_1 \rVert_{L^\infty} \lVert \delta \sigma \rVert_{L^2}, \\
J_4 &= - \int_{\mathbb{T}^2} \frac{4 \mathcal D_2}{P} \left\lvert \delta \sigma - \frac{1}{2} \Tr \delta \sigma \mathbb I_2 \right\rvert^2 \dx \leq 0, \\
\lvert J_5 \rvert &\leq \lVert \mathcal{D}_1 - \mathcal{D}_2 \rVert_{L^2} \lVert \sigma_1 \rVert_{L^\infty} \lVert \delta \sigma \rVert_{L^2} \lesssim \lVert \sigma_1 \rVert_{L^\infty} \lVert \nabla \delta u \rVert_{L^2} \lVert \delta \sigma \rVert_{L^2}, \\
J_6 &= - \int_{\mathbb T^2} \frac{\mathcal D_2}{2P} \lvert \Tr \delta \sigma \rvert^2 \dx, \\
\lvert J_7 \rvert &= \bigg\lvert - \int_{\mathbb{T}^2} c_w \rho_w \lvert U_w - u_1 \rvert \bigg[ \delta u \cos \theta + (\delta u)^\perp \sin \theta \bigg] \cdot \delta u \dx \\
&+ \int_{\mathbb{T}^2} c_w \rho_w \big[ \lvert U_w - u_1 \rvert - \lvert U_w - u_2 \rvert \big] \bigg[ (U_w - u_2) \cos \theta + (U_w - u_2)^\perp \sin \theta \bigg] \cdot \delta u \dx \bigg\rvert \\
&\lesssim (\lVert U_w \rVert_{L^\infty} + \lVert u_1 \rVert_{L^\infty} + \lVert u_2 \rVert_{L^\infty} ) \lVert \delta u \rVert_{L^2}^2.
\end{align*}
We therefore obtain the following estimate
\begin{align}
&\frac{1}{2} \frac{\rm d}{\dt} \bigg[ \lVert \delta u \rVert_{L^2}^2 + \alpha^2 \lVert \nabla \delta u \rVert_{L^2}^2 + \mathcal{E}^{-1} \lVert \delta \sigma \rVert_{L^2}^2 \bigg] \label{continuousdependence} \\
&= \bigg[ \lVert U_w \rVert_{L^\infty} + \lVert u_1 \rVert_{H^2} + \lVert u_2 \rVert_{H^2} + \lVert \sigma_1 \rVert_{L^\infty} + \lVert \sigma_2 \rVert_{L^\infty} \bigg] \cdot \bigg[ \lVert \delta u \rVert_{L^2}^2 + \alpha^2 \lVert \nabla \delta u \rVert_{L^2}^2 + \mathcal{E}^{-1} \lVert \delta \sigma \rVert_{L^2}^2 \bigg], \nonumber
\end{align}
from which the uniqueness of strong solutions to the advective Kelvin-Voigt EVP model \eqref{advectionsystem} follows by Grönwall's inequality. Note that estimate \eqref{continuousdependence} also shows the continuous dependence of the solution on the initial data. 

\section{Proof of Theorem \ref{globalwellposednessthm}: The Kelvin-Voigt EVP model \eqref{kelvinvoigtsystem} (without advection)}
As was done in the proof of Theorem \ref{localwellposednessthm}, for the proof of Theorem \ref{globalwellposednessthm} we will first regularise the strain rate and introduce an additional term $\beta^4 \partial_t \Delta^2 u$ in the momentum balance. Therefore we consider the following approximate system (and we mollify the initial data in a similar manner to \eqref{mollification})
\begin{align}
&\partial_t (u_{\beta,\delta} - \alpha^2 \Delta u_{\beta,\delta} + \beta^4 \Delta^2 u_{\beta,\delta}) = \nabla \cdot \sigma_{\beta,\delta} + \mathcal{T}_a + \mathcal{T}_w + \Omega u_{\beta,\delta}^\perp - g \nabla H_0, \\
&\dfrac{1}{\mathcal E} \partial_t \sigma_{\beta,\delta} + \dfrac{4 \mathcal{D}_\epsilon}{P} (\sigma_{\beta,\delta} - \frac{1}{2} \Tr \sigma_{\beta,\delta} \mathbb I_2) + \dfrac{\mathcal{D}_\epsilon}{2P} \Tr \sigma_{\beta,\delta} \mathbb I_2 + \dfrac{\mathcal{D}_\epsilon}{2} \mathbb I_2  = D(u_{\beta,\delta}),
	\\
&u \lvert_{t = 0} = u_0^\delta, \quad \sigma \lvert_{t = 0} = \sigma_0^\delta.
\end{align}
By proceeding in a completely analogous manner as in the proof of Theorem \ref{localwellposednessthm}, we deduce that there exists a unique local-in-time solution $(u,\sigma) \in C([0,T];H^2 (\mathbb{T}^2)) \times C([0,T];H^1 (\mathbb{T}^2))$ (in fact $u \in C([0,T]; H^3 (\mathbb{T}^2))$), where $T$ is independent of $N$ and $\epsilon$, but it depends on $\alpha$, $\beta$, $\delta$, $u_0$, $\sigma_0$, $c_a$, $c_w$, $\rho_a$, $\rho_w$, $\mathcal{E}$, $g$, $H_0$, $P$, $\phi$, $\theta$, $U_a$ and $U_w$. We need to show that the solution $(u_{\beta,\delta},\sigma_{\beta,\delta})$ is global-in-time and satisfies estimates which are uniform in $\beta$ and $\delta$. More precisely, in what follows we show the following uniform estimates
\begin{align}
&\frac{1}{2} \frac{\textrm{d}}{\dt} \bigg[ \lVert u_{\beta,\delta} \rVert_{L^2}^2 + \mathcal{E}^{-1} \lVert \tau_{\beta,\delta} \rVert_{L^2}^2 + \alpha^2 \lVert \nabla u_{\beta,\delta} \rVert_{L^2}^2 + \beta^4 \lVert \Delta u_{\beta,\delta} \rVert_{L^2}^2 \bigg] \lesssim 1 + \lVert u_{\beta,\delta} \rVert_{L^2}^2, \label{L2energyestimate} \\
&\lVert \sigma_{\beta,\delta} (\cdot, t) \rVert_{L^\infty} \leq \lVert \sigma_0 \rVert_{L^\infty} + 2 P, \label{Linftyenergyestimate} \\
&\frac{1}{2} \frac{\textrm{d}}{\dt} \bigg[ \lVert \nabla u_{\beta,\delta} \rVert_{L^2}^2 + \mathcal{E}^{-1} \lVert \nabla \sigma_{\beta,\delta} \rVert_{L^2}^2 + \alpha^2 \lVert \Delta u_{\beta,\delta} \rVert_{L^2}^2 + \beta^4 \lVert \Delta \nabla u_{\beta,\delta} \rVert_{L^2}^2 \bigg] \nonumber \\
&\lesssim \big(\lVert \sigma_{\beta,\delta} \rVert_{L^\infty} + \lVert u_{\beta,\delta} \rVert_{L^2} + 1\big) \big(1 + \lVert u_{\beta,\delta} \rVert_{H^2}^2 + \lVert \sigma_{\beta,\delta} \rVert_{H^1}^2 \big). \label{H1energyestimate}
\end{align}
Note that the $L^\infty (\mathbb{T}^2)$-estimate \eqref{Linftyenergyestimate} will follow from Lemma \ref{Lpestimatelemma}. Observe that these estimate will first be established on the time interval $[0,T]$, but then the global existence of the solution will follow from these bounds. We will first prove the $L^2 (\mathbb{T}^2)$-estimate \eqref{L2energyestimate}. By using the unknown $\tau$ defined in \eqref{newunkown} (which satisfies equation \eqref{modifiedstressequation}), we obtain the following $L^2 (\mathbb{T}^2)$-estimate
\begin{align*}
&\frac{1}{2} \frac{\textrm{d}}{\dt} \bigg[ \lVert u_{\beta,\delta} \rVert_{L^2}^2 + \mathcal{E}^{-1} \lVert \tau_{\beta,\delta} \rVert_{L^2}^2 + \alpha^2 \lVert \nabla u_{\beta,\delta} \rVert_{L^2}^2 + \beta^4 \lVert \Delta u_{\beta,\delta} \rVert_{L^2}^2 \bigg] \\
&= \int_{\mathbb{T}^2} \bigg[ (\nabla \cdot \tau_{\beta,\delta}) \cdot u_{\beta,\delta} + D(u_{\beta,\delta}) : \tau_{\beta,\delta} \bigg] \dx \\
&+ \int_{\mathbb{T}^2} \bigg[ \mathcal{T}_a + \mathcal{T}_w + \Omega u^\perp_{\beta,\delta} - g \nabla H_0 \big] \cdot u_{\beta,\delta} \dx \\
&- \int_{\mathbb{T}^2} \bigg[ \frac{4 \mathcal{D}_\epsilon}{P} (\tau_{\beta,\delta} - \frac{1}{2} \Tr \tau_{\beta,\delta} \mathbb I_2) + \frac{\mathcal{D}_\epsilon}{2 P} \Tr \tau_{\beta,\delta} \mathbb I_2 \bigg] : \tau_{\beta,\delta} \dx \\
&\eqqcolon I_9 + I_{10} + I_{11}.
\end{align*}
We then have 
\begin{align*}
I_9 &= 0, \\
I_{10} &\lesssim \int_{\mathbb{T}^2} \mathcal{T}_w \cdot u_{\beta,\delta} \dx + 1 + \lVert u_{\beta,\delta} \rVert_{L^2}^2, \\
I_{11} &= - \int_{\mathbb{T}^2} \bigg[ \frac{4 \mathcal{D}_\epsilon}{P} \bigg\lvert \tau_{\beta,\delta} - \frac{1}{2} \Tr \tau_{\beta,\delta} \mathbb I_2 \bigg\rvert^2 + \frac{\mathcal{D}_\epsilon}{2 P} \lvert \Tr \tau_{\beta,\delta} \rvert^2 \bigg] \dx \leq 0.
\end{align*}
In the case of $I_9$ we have used the divergence theorem. In order to bound the right-hand side of the estimate for $I_{10}$ we estimate the contribution from the oceanic drag force
\begin{align*}
\int_{\mathbb{T}^2} \mathcal{T}_w \cdot u_{\beta,\delta} \dx &= - c_w \rho_w \int_{\mathbb{T}^2} \lvert U_w - u_{\beta,\delta} \rvert^3 \cos \theta \dx \\
&+ c_w \rho_w \int_{\mathbb{T}^2} \lvert U_w - u_{\beta,\delta} \rvert \bigg[ (U_w - u_{\beta,\delta}) \cdot U_w \cos \theta + U_w^\perp \cdot u_{\beta,\delta} \sin \theta \bigg] \dx \\
&\lesssim 1 + \lVert u_{\beta,\delta} \rVert_{L^2}^2.
\end{align*}
Therefore we can conclude that
\begin{align} \label{L2estimate}
\frac{1}{2} \frac{\textrm{d}}{\dt} \bigg[ \lVert u_{\beta,\delta} \rVert_{L^2}^2 + \mathcal{E}^{-1} \lVert \tau_{\beta,\delta} \rVert_{L^2}^2 + \alpha^2 \lVert \nabla u_{\beta,\delta} \rVert_{L^2}^2 + \beta^4 \lVert \Delta u_{\beta,\delta} \rVert_{L^2}^2 \bigg] \lesssim 1 + \lVert u_{\beta,\delta} \rVert_{L^2}^2.
\end{align}
From this it follows that we have a uniform bound on $\lVert u_{\beta,\delta} \rVert_{H^1}$ and $\lVert \sigma_{\beta,\delta} \rVert_{L^2}$ with respect to $\beta$ and $\delta$. Then by applying Lemma \ref{Lpestimatelemma} we obtain a uniform estimate on $\lVert \sigma_{\beta,\delta} \rVert_{L^\infty}$. Next we turn to the proof of the $\dot{H}^1 (\mathbb{T}^2)$-estimate \eqref{H1energyestimate}. We have
\begin{align*}
&\frac{1}{2} \frac{\textrm{d}}{\dt} \bigg[ \lVert \nabla u_{\beta,\delta} \rVert_{L^2}^2 + \mathcal{E}^{-1} \lVert \nabla \sigma_{\beta,\delta} \rVert_{L^2}^2 + \alpha^2 \lVert \Delta u_{\beta,\delta} \rVert_{L^2}^2 + \beta^4 \lVert \Delta \nabla u_{\beta,\delta} \rVert_{L^2}^2 \bigg] \\
&+ \int_{\mathbb{T}^2} \bigg[ \frac{4 \mathcal{D}_\epsilon}{P} \lvert \nabla (\sigma_{\beta,\delta} - \frac{1}{2} \Tr \sigma_{\beta,\delta} \mathbb I_2 ) \rvert^2 + \frac{\mathcal{D}_\epsilon}{2 P} \lvert \nabla \Tr \sigma_{\beta,\delta} \rvert^2 \bigg] \dx \\
&= \sum_{i=1}^2 \int_{\mathbb{T}^2} \bigg[ (\nabla \cdot \partial_i \sigma_{\beta,\delta}) \cdot \partial_i u_{\beta,\delta} + D(\partial_i u_{\beta,\delta}) : \partial_i \sigma_{\beta,\delta} \bigg] \dx \\
&+ \int_{\mathbb{T}^2} \bigg[ \mathcal{T}_a + \mathcal{T}_w + \Omega u^\perp_{\beta,\delta} - g \nabla H_0 \big] \cdot - \Delta u_{\beta,\delta} \dx \\
&- \sum_{i=1}^2 \int_{\mathbb{T}^2} \bigg[ \frac{4 \partial_i \mathcal{D}_\epsilon}{P} (\sigma_{\beta,\delta} - \frac{1}{2} \Tr \sigma_{\beta,\delta} \mathbb I_2) + \frac{\partial_i \mathcal{D}_\epsilon}{2 P} \Tr \sigma_{\beta,\delta} \mathbb I_2 + \frac{\partial_i \mathcal{D}_\epsilon}{2} \mathbb I_2 \bigg] \cdot \partial_i \sigma_{\beta,\delta} \dx \\
&\eqqcolon I_{12} + I_{13} + I_{14}.
\end{align*}
By the divergence theorem we have $I_{12} = 0$. Moreover, we have
\begin{align*}
\lvert I_{13} \rvert &\lesssim \big(\lVert U_a \rVert_{L^4}^2 + \lVert U_w \rVert_{L^4}^2 + \lVert u_{\beta,\delta} \rVert_{L^4}^2 + \lVert u_{\beta,\delta} \rVert_{L^2} + \lVert H_0 \rVert_{H^1} \big) \lVert \Delta u_{\beta,\delta} \rVert_{L^2}, \\
\lvert I_{14} \rvert &\lesssim \lVert \Delta u_{\beta,\delta} \rVert_{L^2} \big(\lVert \sigma_{\beta,\delta} \rVert_{L^\infty} + 1\big) \lVert \nabla \sigma_{\beta,\delta} \rVert_{L^2}.
\end{align*}
We therefore conclude that
\begin{align} \nonumber
&\frac{1}{2} \frac{\textrm{d}}{\dt} \bigg[ \lVert \nabla u_{\beta,\delta} \rVert_{L^2}^2 + \mathcal{E}^{-1} \lVert \nabla \sigma_{\beta,\delta} \rVert_{L^2}^2 + \alpha^2 \lVert \Delta u_{\beta,\delta} \rVert_{L^2}^2 + \beta^4 \lVert \Delta \nabla u_{\beta,\delta} \rVert_{L^2}^2 \bigg] \\
&\lesssim \big(\lVert \sigma_{\beta,\delta} \rVert_{L^\infty} + \lVert u_{\beta,\delta} \rVert_{L^2} + 1\big) \big(1 + \lVert u_{\beta,\delta} \rVert_{H^2}^2 + \lVert \sigma_{\beta,\delta} \rVert_{H^1}^2 \big). \label{H1estimate}
\end{align}
Then by applying the Grönwall inequality to estimates \eqref{L2estimate} and \eqref{H1estimate} (as well as using Lemma \ref{Lpestimatelemma}) we have
\begin{align} \label{uniformestimate}
\sup_{t \in [0,T]} \biggl\lbrack \lVert u_{\beta,\delta} \rVert_{H^2} + \lVert \sigma_{\beta,\delta} \rVert_{H^1} + \lVert \sigma_{\beta,\delta} \rVert_{L^\infty} + \Vert \partial_t u_{\beta,\delta} \Vert_{H^2} + \Vert \partial_t\sigma_{\beta,\delta} \Vert_{L^2} \biggr\rbrack \leq C < \infty,
\end{align}
where we observe that the constant $C$ is independent of $\beta, \delta$ and $\epsilon$, while it depends on $\alpha$, $\mathcal{E}$ and $T$. Then by applying the Aubin-Lions lemma and the Banach-Alaoglu theorem we obtain the following convergence results as $\beta, \delta \rightarrow 0$ (by passing to subsequences, if required)
\begin{align}
    u_{\beta,\delta} & \overset{\ast}{\rightharpoonup} u &  \text{weakly-$* $ in}& ~ L^\infty ((0,T);H^2 (\mathbb{T}^2)), \\
    \partial_t u_{\beta,\delta} & \overset{\ast}{\rightharpoonup} \partial_t u &  \text{weakly-$* $ in}& ~ L^\infty ((0,T);H^2 (\mathbb{T}^2)),  \\
    u_{\beta,\delta} & \rightarrow u & \text{strongly in}& ~ C([0,T];H^1(\mathbb T^2)), \\
    \sigma_{\beta,\delta} &\overset{\ast}{\rightharpoonup} \sigma & \text{weakly-$* $ in}& ~ L^\infty ((0,T);H^1 (\mathbb{T}^2)), \\
    \sigma_{\beta,\delta} &\rightarrow \sigma & \text{strongly in}& ~ C([0,T];L^p(\mathbb T^2)),\\
    \partial_t \sigma_{\beta,\delta} & \overset{\ast}{\rightharpoonup} \partial_t \sigma & \text{weakly-$* $ in}& ~ L^\infty ((0,T); L^2 (\mathbb{T}^2)),
\end{align}
where $u \in C([0,T];H^2 (\mathbb{T}^2))$, $\sigma \in C([0,T];H^1_w (\mathbb{T}^2)) \cap C([0,T]; L^2 (\mathbb{T}^2))$ and $p \in [1,\infty)$. It easily follows that the limit $(u,\sigma)$ satisfies the regularised Kelvin-Voigt EVP model \eqref{kelvinvoigtsystemepsilon} with the regularised strain rate $\mathcal{D}_\epsilon$. Using similar estimates as in Section \ref{uniquenesssection}, we know that the solution $(u,\sigma)$ is the unique strong solution of the intermediate system \eqref{kelvinvoigtsystemepsilon}.

Then because we again have the uniform regularity estimates \eqref{L2energyestimate}-\eqref{H1energyestimate} (which are independent of $\epsilon$) for a sequence of solutions $(u_\epsilon ,\sigma_\epsilon )$ to \eqref{kelvinvoigtsystemepsilon} with the regularised strain rate $\mathcal{D}_\epsilon$ (as $\epsilon \rightarrow 0$)
\begin{equation}
\sup_{t \in [0,T]} \biggl\lbrack \lVert u_\epsilon \rVert_{H^2} + \lVert \sigma_\epsilon \rVert_{H^1} + \lVert \sigma_\epsilon \rVert_{L^\infty} + \Vert \partial_t u_\epsilon \Vert_{H^2} + \Vert \partial_t\sigma_\epsilon \Vert_{L^2} \biggr\rbrack \leq C < \infty.
\end{equation}
Therefore, we can pass to the limit $\epsilon \rightarrow 0$ in a similar fashion as before and find a solution $u \in C([0,T];H^2 (\mathbb{T}^2))$ and $\sigma \in C([0,T];H^1_w (\mathbb{T}^2)) \cap C([0,T]; L^2 (\mathbb{T}^2))$. Moreover, by proceeding in a similar manner as in Section \ref{uniquenesssection} we obtain that the constructed strong solution $(u,\sigma)$ is unique (and depends continuously on the initial data), which concludes the proof of the global well-posedness of the Kelvin-Voigt EVP model \eqref{kelvinvoigtsystem}.

\section{Conclusion}
In this paper, we have studied the EVP model with a Kelvin-Voigt regularisation of the momentum balance. We have considered two cases, namely both the presence and absence of an advection term in the momentum balance. In Theorem \ref{localwellposednessthm} we have proved the local well-posedness of the advective Kelvin-Voigt EVP model, while in Theorem \ref{globalwellposednessthm} we have proved the global well-posedness of the Kelvin-Voigt EVP model. A crucial new idea of the proof was the $L^\infty$-estimate which we established in Lemma \ref{Lpestimatelemma}, which in turn made it possible to obtain the $H^1$-estimate. 

From the modelling point of view, an advantage of the addition of the (Kelvin-)Voigt-regularisation to the momentum balance instead of the constitutive relation is that it is a more straightforward and clear modification of the rheology. The fact that we regularise in the unknown in which there is a loss of derivative allows us in Theorem \ref{globalwellposednessthm} to prove the existence of global strong solutions for much less regular initial data compared to \cite{boutros2025}. Moreover, we are able to handle the case of strain rates without cutoff (i.e., passing to the limit $\epsilon \rightarrow 0$) in the presence of an advection term in Theorem \ref{localwellposednessthm}, which was not possible with the estimates obtained in \cite{boutros2025} which only allowed the treatment of this limit in the absence of the advection term. However, as noted in \cite{boutros2025}, one can prove the local well-posedness of the Voigt-EVP model (i.e. the case where the constitutive relation includes the Voigt regularisation) with the advection term for any $\epsilon > 0$. The treatment of the full EVP system with a Voigt regularisation (i.e., including the hyperbolic balance laws for the mean ice thickness $h$ and ice compactness $A$) is left to future work. 

\section*{Acknowledgements}
The authors would like to thank Elizabeth Hunke for useful discussions at Texas A\&M University, and for her guidance through the various models used by practitioners. D.W.B. acknowledges support from the Cambridge Trust and the Cantab Capital Institute for Mathematics of Information. D.W.B. and E.S.T. have benefitted from the inspiring environment of the CRC 1114 ``Scaling Cascades in Complex Systems'', Project Number 235221301, Project C09, funded by the Deutsche Forschungsgemeinschaft (DFG). M.T. also acknowledges the funding by the DFG within the CRC 1114 ``Scaling Cascades in Complex Systems'', Project Number 235221301, Project B09. Moreover, this work was also supported in part by the DFG Research Unit FOR 5528 on Geophysical Flows. D.W.B. and M.T. would like to acknowledge the kind hospitality of the Department of Mathematics, Texas A\&M University, and M.T. also acknowledges the generous hospitality of the Department of Applied Mathematics and Theoretical Physics, University of Cambridge, where part of this work was completed.

\footnotesize
\bibliographystyle{acm}
\bibliography{main}

\end{document}